\DeclareMathOperator{\tr}{tr}
\DeclareMathOperator*{\argmin}{arg\,min}
\def\ds{\displaystyle}
\def\tilde{\widetilde}
\newcommand{\RR}{\mathbb{R}}
\newcommand{\NN}{\mathbb{N}}
\newcommand{\eps}{\epsilon}
\newcommand{\balpha}{\boldsymbol{\alpha}}
\newcommand{\eeta}{\boldsymbol{\eta}}
\newcommand{\bzeta}{\boldsymbol{\zeta}}
\newcommand{\btheta}{\boldsymbol{\theta}}
\newcommand{\bvartheta}{\boldsymbol{\vartheta}}
\newcommand{\bDelta}{\boldsymbol{\Delta}}
\newcommand{\bxi}{\boldsymbol{\xi}}
\newcommand{\bfeta}{{\boldsymbol{\eta}}}
\newcommand{\bfE}{\mathbf E}
\newcommand{\bfH}{\mathbf H}
\newcommand{\bfM}{\mathbf M}
\newcommand{\bfI}{\mathbf I}
\newcommand{\bfSigma}{\boldsymbol\Sigma}
\newcommand{\bh}{\boldsymbol h}
\newcommand{\bu}{\boldsymbol u}
\newcommand{\bv}{\boldsymbol v}
\newcommand{\bU}{\boldsymbol U}
\newcommand{\bV}{\boldsymbol V}
\newcommand{\bW}{\boldsymbol W\!}
\newcommand{\bD}{\boldsymbol D}
\newcommand{\bL}{\boldsymbol L}
\newcommand{\bS}{\boldsymbol S}
\newcommand{\bX}{\boldsymbol X}
\newcommand{\bx}{\boldsymbol x}
\newcommand{\bY}{\boldsymbol Y}
\newcommand{\by}{\boldsymbol y}
\newtheorem{remark}{Remark}[section]
\newtheorem{lemma}{Lemma}
\newtheorem{proposition}{Proposition}
\newtheorem{theorem}{Theorem}
\journal{Stochastic Processes and their Applications}
\begin{document}

\parindent=0pt

\begin{frontmatter}

\title{User-friendly guarantees for the Langevin Monte Carlo with inaccurate gradient}

\author{Arnak S. Dalalyan}
\author{Avetik Karagulyan}

\cortext[author] {Corresponding author.\\ \textit{E-mail address:} arnak.dalalyan@ensae.fr}
\address{CREST, ENSAE, 5 av.\ Henry Le Chatelier,
91120 Palaiseau, France.}

\begin{abstract}
In this paper, we study the problem of sampling from a given probability density
function that is known to be smooth and strongly log-concave. We analyze several
methods of approximate sampling based on discretizations of the (highly overdamped)
Langevin diffusion and establish guarantees on its error measured in the
Wasserstein-2 distance. Our guarantees improve or extend the state-of-the-art
results in three directions. First, we provide an upper bound on the error of the
first-order Langevin Monte Carlo (LMC) algorithm with optimized varying step-size.
This result has the advantage of being horizon free (we do not need to know in
advance the target precision) and to improve by a logarithmic factor the
corresponding result for the constant step-size. Second, we study the case where
accurate evaluations of the gradient of the log-density are unavailable, but one
can have access to approximations of the aforementioned gradient. In such a situation,
we consider both deterministic and stochastic approximations of the gradient and
provide an upper bound on the sampling error of the first-order LMC that quantifies
the impact of the gradient evaluation inaccuracies. Third, we establish upper bounds
for two versions of the second-order LMC, which leverage the Hessian of the
log-density. We provide nonasymptotic guarantees on the sampling error of these second-order
LMCs. These guarantees reveal that the second-order LMC algorithms improve on the
first-order LMC in ill-conditioned settings.
\end{abstract}

\begin{keyword}
Markov Chain Monte Carlo\sep
Approximate sampling\sep
Rates of convergence\sep
Langevin algorithm \sep
Gradient descent\sep
\MSC[2010] Primary 62J05 \sep Secondary 62H12
\end{keyword}

\end{frontmatter}


\section{Introduction}

The problem of sampling a random vector distribu\-ted according to a given
target distribution is central in many applications. In the present paper,
we consider this problem in the case of a target distribution having a smooth
and log-concave density $\pi$ and when the sampling is performed by a version of
the Langevin Monte Carlo algorithm (LMC). More precisely,
for a positive integer $p$, we consider a continuously differentiable
function $f:\RR^p\to\RR$ satisfying the following assumption:
For some positive constants $m$ and $M$, it holds
\begin{equation} \label{1}
\begin{cases}
f(\btheta)-f(\btheta')-\nabla f(\btheta')^\top (\btheta-\btheta')
\ge (\nicefrac{m}2)\|\btheta-\btheta'\|_2^2, \text{\vphantom{$I_{\textstyle\int_{I_I}}$}}\\
\|\nabla f(\btheta)-\nabla f(\btheta')\|_2 \le M \|\btheta-\btheta'\|_2,
\end{cases}
\qquad \forall \btheta,\btheta'\in\RR^p,
\end{equation}
where $\nabla f$ stands for the gradient of $f$ and $\|\cdot\|_2$ is the
Euclidean norm. The target distributions considered in this paper are those
having a density with respect to the Lebesgue measure on $\RR^p$ given by
\begin{equation}
\pi(\btheta) = \frac{e^{-f(\btheta)}}{\int_{\RR^p} e^{-f(\bu)}\,d\bu}.
\end{equation}
We say that the density $\pi(\btheta)\propto e^{-f(\btheta)}$ is log-concave
(resp.\ strongly log-concave) if the function $f$ satisfies the first inequality
of (\ref{1}) with $m=0$ (resp.\ $m>0$).

Most part of this work focused on the analysis of the LMC algorithm, which can
be seen as the analogue in the problem of sampling of the gradient descent
algorithm for optimization. For a sequence of positive parameters
$\bh=\{h_k\}_{k\in\NN}$, referred to as the step-sizes and for an initial point
$\bvartheta_{0,\bh}\in\RR^p$  that may be deterministic or random, the iterations of
the LMC algorithm are defined by the update rule
\begin{align}\label{2}
\bvartheta_{k+1,\bh} = \bvartheta_{k,\bh} - h_{k+1} \nabla f(\bvartheta_{k,\bh})+ \sqrt{2h_{k+1}}\;\bxi_{k+1};
\qquad k=0,1,2,\ldots
\end{align}
where  $\bxi_1,\ldots,\bxi_{k},\ldots$ is a sequence of mutually independent, and independent of $\bvartheta_{0,\bh}$, centered Gaussian vectors with covariance
matrices equal to identity.

When all the $h_k$'s are equal to some value $h>0$, we will call the sequence
in \eqref{2} the constant step LMC and will denote it by $\bvartheta_{k+1,h}$.
When $f$ satisfies assumptions \eqref{1}, if $h$ is small and $k$ is large (so that
the product $kh$ is large), the distribution of $\bvartheta_{k,h}$ is known
to be a good approximation to the distribution with density $\pi(\btheta)$. An
important question is to quantify the quality of this approximation. An appealing approach to address this question is by establishing non asymptotic upper bounds on
the error of sampling; this kind of bounds are particularly useful for deriving a stopping rule for the LMC algorithm, as well as for understanding the computational
complexity of sampling methods in high dimensional problems. In the present paper
we establish such bounds by focusing on their user-friendliness. The latter means
that our bounds are easy to interpret, hold under conditions that are not difficult
to check and lead to simple theoretically grounded choice of the number of
iterations and the step-size.

In the present work, we measure the error of sampling in the
Wasserstein-Monge-Kantorovich distance $W_2$. For two measures $\mu$ and $\nu$
defined on $(\RR^p,\mathscr B(\RR^p))$, and
for a real number $q\ge 1$, $W_q$ is defined by
\begin{equation}
W_q(\mu,\nu) = \Big(\inf_{\varrho\in \varrho(\mu,\nu)} \int_{\RR^p\times \RR^p}
		\|\btheta-\btheta'\|_2^q\,d\varrho(\btheta,\btheta')\Big)^{1/q},
\end{equation}
where the $\inf$ is with respect to all joint distributions $\varrho$ having $\mu$ and
$\nu$ as marginal distributions. For statistical and machine learning applications,
we believe that this distance is more suitable for assessing the quality of approximate
sampling schemes than other metrics such as the total variation or the Kullback-Leibler
divergence. Indeed, bounds on the Wasserstein distance---unlike the
bounds on the total-variation---provide direct guarantees on the accuracy of approximating the first and the second order moments.

Asymptotic properties of the LMC algorithm, also known as Unadjusted Langevin Algorithm (ULA),
and its Metropolis adjusted version, MALA, have been studied in a number of papers
\citep{RobertsTweedie96,RobertsRosenthal98,StramerTweedie99-1,StramerTweedie99-2,Jarner2000,
RobertsStramer02}. These results do not emphasize the effect of the dimension on the computational
complexity of the algorithm, which is roughly proportional to the number of iterations.
Non asymptotic bounds on the total variation error of the LMC for log-concave and strongly
log-concave distributions have been established by \cite{Dalalyan14}. If a  warm start is
available, the results in \cite{Dalalyan14} imply that after $O(p/\eps^2)$ iterations the
LMC algorithm has an error bounded from above by $\eps$. Furthermore, if we assume that
in addition to \eqref{1} the function $f$ has a Lipschitz continuous Hessian, then a modified
version of the LMC, the LMC with Ozaki discretization (LMCO), needs $O(p/\eps)$ iterations
to achieve a precision level $\eps$. These results were improved and extended to the Wasserstein
distance by \citep{durmus2017,Durmus2}. More precisely, they removed the condition of the warm
start and proved that under the Lipschitz continuity assumption on the Hessian of $f$, it is
not necessary to modify the LMC for getting the rate $O(p/\eps)$. The last result is closely
related to an error bound between a diffusion process and its Euler discretization established
by \cite{alfonsi2014}.

On a related note, \citep{Bubeck18} studied the convergence of the LMC algorithm with reflection
at the boundary of a compact set, which makes it possible to sample from a compactly supported
density (see also \citep{brosse17a}). Extensions to non-smooth densities were presented in
\citep{Durmus3,Luu}. \citep{Cheng1} obtained guarantees similar to those in \citep{Dalalyan14}
when the error is measured by the Kullback-Leibler divergence. Very recently, \citep{Cheng2}
derived non asymptotic guarantees for the kinetic LMC which turned out to improve on the
previously known results. Langevin dynamics was used in \citep{Ridgway16,Brosse17b} in order to
approximate normalizing constants of target distributions. \cite{huggins17a} established tight
bounds in Wasserstein distance between the invariant distributions of two (Langevin) diffusions;
the bounds involve mixing rates of the diffusions and the deviation in their drifts.

The goal of the present work is to push further the study of the LMC and its variants both by
improving the existing guarantees and by extending them in some directions. Our main contributions
can be summarized as follows:
\begin{itemize}

\item We state simplified guarantees in Wasserstein distance with improved constants
both for the LMC and the LMCO when the step-size is constant, see \Cref{thOne} and
\Cref{thFive}.

\item We propose a varying-step LMC which avoids a logarithmic factor in the number
of iterations required to achieve a precision level $\eps$, see \Cref{thOneBis}.

\item We extend the previous guarantees to the case where accurate evaluations of the gradient are
unavailable. Thus, at each iteration of the algorithm, the gradient is computed within an error that
has a deterministic and a stochastic component. \Cref{thTwo} deals with functions $f$ satisfying \eqref{1},
whereas \Cref{thFour} requires the additional assumption of the smoothness of the Hessian of $f$.

\item We propose a new second-order sampling algorithm termed LMCO'. It has a per-iteration computational
cost comparable to that of the LMC and enjoys nearly the same guarantees as the LMCO, when the Hessian
of $f$ is Lipschitz continuous, see \Cref{thFive}.

\item We provide a detailed discussion of the relations between, on the one hand, the sampling methods
and guarantees of their convergence and, on the other hand, optimization methods and guarantees of their
convergence (see \Cref{secOpt}).
\end{itemize}
We have to emphasize right away that \Cref{thOne} is a corrected version of
\citep[Theorem 1]{DalalyanColt}, whereas \Cref{thTwo} extends \citep[Theorem 3]{DalalyanColt}
to more general noise. In particular, \Cref{thTwo} removes the unbiasedness and independence
conditions.  Furthermore, thanks to a shrewd use of a recursive inequality, the upper bound
in~\Cref{thTwo} is tighter than the one in~\citep[Theorem 3]{DalalyanColt}.

As an illustration of the first two bullets mentioned in the above summary of our contributions,
let us consider the following example. Assume that $m=10$, $M=20$
and we have at our disposal an initial sampling distribution $\nu_0$ satisfying
$W_2(\nu_0,\pi) = p + (p/m)$. The main inequalities in \Cref{thOne}  and
\Cref{thOneBis} imply that after $K$ iterations, the distribution $\nu_K$
obtained by the LMC algorithm satisfies
\begin{align}\label{G1}
W_2(\nu_K, \pi) \le  (1-mh)^K W_2(\nu_0,\pi) + 1.65(M/m)(hp)^{1/2}
\end{align}
for the constant step LMC and
\begin{align}\label{G2}
W_2(\nu_K,\pi)\le \frac{3.5M\sqrt{p}}{m\sqrt{M+m+(\nicefrac23)m(K-K_1)}}
\end{align}
for the varying-step LMC, where $K_1$ is an integer the precise value of which
is provided in \Cref{thOneBis}. One can compare these inequalities with the corresponding
bound in \citep{Durmus2}: adapted to the constant-step, it takes the form
\begin{align}
W_2^2(\nu_{K}, \pi) \le &
2\Big(1-\frac{mMh}{m+M}\Big)^K W^2_2(\nu_0,\pi)\\
&\ +
\frac{Mhp}{m}(m+M)\Big(h + \frac{m+M}{2mM}\Big)\Big(2+\frac{M^2h}{m}+\frac{M^2h^2}{6}\Big).
\label{G3}
\end{align}
For any $\eps>0$, we can derive from these guarantees the smallest number of iterations,
$K_\eps$, for which there is a $h>0$ such that the corresponding upper bound is smaller
than $\eps$. The logarithms of these values $K_\eps$ for varying $\eps\in\{0.001,0.005,0.02\}$
and $p\in\{25,\ldots,1000\}$ are plotted in \Cref{fig1}. We observe that for all the considered values
of $\eps$ and $p$, the number of iterations derived from \eqref{G2} (referred to as \Cref{thOneBis})
is smaller than those derived from \eqref{G1} (referred to as \Cref{thOne}) and from
\eqref{G3} (referred to as DM bound). The difference between the varying-step LMC and the constant
step LMC becomes more important when the target precision level $\eps$ gets smaller.
In average over all values of $p$, when $\eps = 0.001$, the number of iterations derived from
\eqref{G3} is 4.6 times larger than that derived from \eqref{G2}, and almost $3$ times larger
than the number of iterations derived from \eqref{G1}.

\begin{figure}
\includegraphics[width=0.8\textwidth]{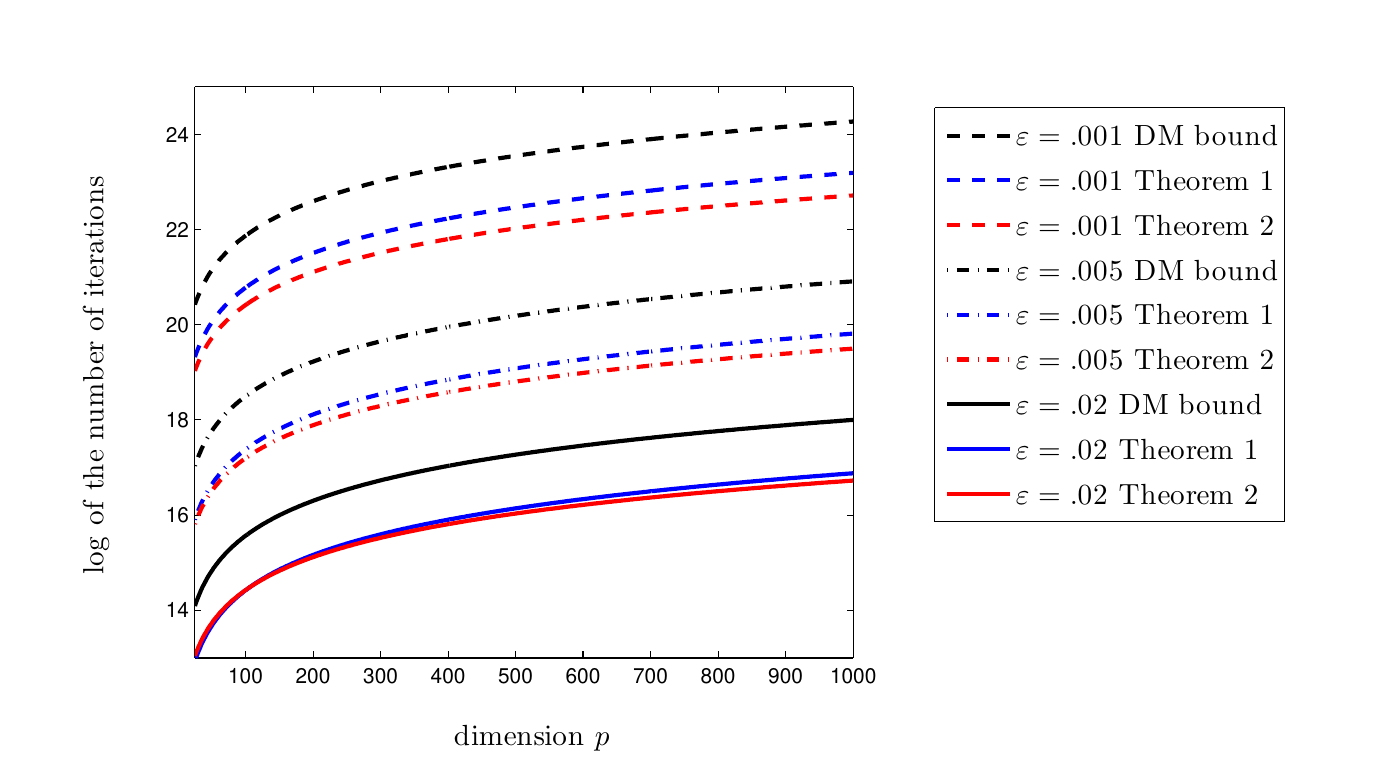}
\caption{Plots showing the logarithm of the number of iterations as function of dimension $p$ for
several values of $\eps$. The plotted values are derived from \eqref{G1}-\eqref{G3} using the data $m=10$,
$M=20$, $W_2^2(\nu_0,\pi)=p + (p/m)$.}
\label{fig1}
\end{figure}

\section{Guarantees in the Wasserstein distance with accurate gradient}
\label{sec:1}

The rationale behind the LMC \eqref{2} is simple: the Markov chain
$\{\bvartheta_{k,\bh}\}_{k\in\NN}$ is the Euler discretization of a continuous-time
diffusion process $\{\bL_t :t\in\RR_+\}$, known as Langevin diffusion. The latter is
defined by the stochastic differential equation
\begin{equation}\label{3}
d\bL_t = -\nabla f(\bL_t)\,dt + \sqrt{2} \; d\bW_t,\qquad t\ge 0,
\end{equation}
where $\{\bW_t:t\ge 0\}$ is a $p$-dimensional Brownian motion. When $f$ satisfies
condition (\ref{1}), equation (\ref{3}) has a unique strong solution, which is a Markov
process. Furthermore, the process $\bL$ has $\pi$ as invariant
density~\citep[Thm. 3.5]{bhattacharya1978}. Let $\nu_k$ be the distribution of the
$k$-th iterate of the LMC algorithm, that is $\vartheta_{k,\bh}\sim \nu_k$. In what
follows, we present user-friendly guarantees on the closeness of $\nu_k$ and $\pi$,
when $f$ is strongly convex.

\subsection{Reminder on guarantees for the constant-step LMC}

When the function $f$ is $m$-strongly convex and $M$-gradient Lipschitz,
upper bounds on the sampling error measured in Wasserstein distance of the
LMC algorithm have been established in \citep{Durmus2,DalalyanColt}.
We state below a slightly adapted version of their result, which will serve
as a benchmark for the bounds obtained in this work.

\begin{theorem}\label{thOne}
Assume that $h\in(0,\nicefrac2M)$ and $f$ satisfies condition \eqref{1}.
The following claims hold:
\begin{enumerate}\itemsep=6pt
\item[{\rm (a)}\!\!] \!\! If $h\le \nicefrac2{(m+M)}$ then $W_2(\nu_K, \pi) \le
(1-mh)^K W_2(\nu_0,\pi) + 1.65(\frac{M}{m})(hp)^{1/2}$\!\!.
\item[{\rm (b)}\!\!] \!\! If $h\ge \nicefrac2{(m+M)}$ then $W_2(\nu_K, \pi) \le
\ds (Mh-1)^K W_2(\nu_0,\pi) + \frac{1.65 Mh}{2-Mh}(hp)^{1/2}$\!\!.
\end{enumerate}
\end{theorem}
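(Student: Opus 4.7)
The plan is to bound $W_2(\nu_K,\pi)$ by constructing a synchronous coupling between the LMC chain and the stationary Langevin diffusion. Specifically, pick $(\bvartheta_{0,h},\bL_0)$ with $\bL_0\sim\pi$ and $\EE\|\bvartheta_{0,h}-\bL_0\|_2^2 = W_2^2(\nu_0,\pi)$, run the diffusion \eqref{3} with some $\bW$, and set $\bxi_{k+1}=(\bW_{(k+1)h}-\bW_{kh})/\sqrt h$. Because $\pi$ is invariant, $\bL_{kh}\sim\pi$ for all $k$, so it suffices to control $\EE\|\bD_k\|_2^2$ where $\bD_k:=\bvartheta_{k,h}-\bL_{kh}$, and then bound $W_2(\nu_K,\pi)\le \|\bD_K\|_{L^2}$.

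The key one-step decomposition is obtained by writing
\[
\bD_{k+1}=\bigl[T_h(\bvartheta_{k,h})-T_h(\bL_{kh})\bigr]-\int_{kh}^{(k+1)h}\bigl[\nabla f(\bL_s)-\nabla f(\bL_{kh})\bigr]\,ds,
\]
where $T_h(\bx)=\bx-h\nabla f(\bx)$. I will use two classical estimates. First, pointwise contraction of $T_h$: because $\bigl(\nabla^2 f\bigr)\in[mI,MI]$, the Jacobian $I-h\int_0^1\nabla^2 f(\cdot)\,dt$ has operator norm at most $\max(|1-mh|,|1-Mh|)$, which equals $1-mh$ for $h\le 2/(m+M)$ and $Mh-1$ otherwise; thus $\|T_h(\bvartheta_{k,h})-T_h(\bL_{kh})\|_{L^2}\le\rho\|\bD_k\|_{L^2}$ with $\rho$ the appropriate factor. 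Second, for the discretization remainder I apply Minkowski's integral inequality and $M$-Lipschitzness to get $\|\int_{kh}^{(k+1)h}[\nabla f(\bL_s)-\nabla f(\bL_{kh})]\,ds\|_{L^2}\le M\int_0^h\|\bL_{kh+t}-\bL_{kh}\|_{L^2}\,dt$. Using the SDE and stationarity, $\|\bL_{kh+t}-\bL_{kh}\|_{L^2}\le t\,\EE_\pi\|\nabla f\|_2^2{}^{1/2}+\sqrt{2pt}$, and the integration-by-parts identity $\EE_\pi\|\nabla f\|_2^2=\EE_\pi[\Delta f]\le Mp$ yields the bound $\sqrt{Mp}\,t+\sqrt{2pt}$.

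Combining gives a recursion $\|\bD_{k+1}\|_{L^2}\le \rho\,\|\bD_k\|_{L^2}+\delta$ with
\[
\delta\le M\sqrt{p}\,h^{3/2}\Bigl[\tfrac{\sqrt{Mh}}{2}+\tfrac{2\sqrt2}{3}\Bigr].
\]
Since $h<2/M$ implies $\sqrt{Mh}<\sqrt 2$, the bracket is bounded by $\tfrac{7\sqrt 2}{6}\le 1.65$, whence $\delta\le 1.65\,Mh\sqrt{hp}$. Iterating yields $\|\bD_K\|_{L^2}\le \rho^K\|\bD_0\|_{L^2}+\delta/(1-\rho)$; in case (a) $1-\rho=mh$ giving the claimed $1.65(M/m)\sqrt{hp}$ variance term, and in case (b) $1-\rho=2-Mh$, which reproduces the factor $Mh/(2-Mh)$. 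The main obstacles are (i) establishing the pointwise contraction of $T_h$ with the sharp rate $\max(|1-mh|,|1-Mh|)$ under only the two assumptions in \eqref{1}, and (ii) tracking constants carefully enough in the two contributions to $\delta$ to get the unified constant $1.65$ valid on the whole admissible range $h\in(0,2/M)$.
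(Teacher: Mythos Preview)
Your proposal is correct and follows essentially the same route as the paper. The paper packages the one-step estimate as \Cref{propA} (with $\sigma=\delta=0$), using precisely your two ingredients: the contraction $\|T_h(\bx)-T_h(\by)\|_2\le \varrho\|\bx-\by\|_2$ with $\varrho=\max(1-mh,Mh-1)$ (their \Cref{lemA}), and the discretization bound $\|\bV\|_{L_2}\le \tfrac12 h^2 M^{3/2}\sqrt{p}+\tfrac{2\sqrt2}{3}h^{3/2}M\sqrt{p}$ obtained from Lipschitzness, stationarity and $\bfE_\pi\|\nabla f\|_2^2\le Mp$ (their \Cref{lemB,lemC}); the constant $7\sqrt2/6\le 1.65$ arises exactly as you describe from $Mh<2$. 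The only organizational difference is that the paper re-couples optimally at every step, yielding a recursion directly in $W_2(\nu_k,\pi)$, whereas you fix one global coupling at time zero and iterate in $\|\bD_k\|_{L^2}$; both lead to the same final bound here, though the step-wise recoupling is what the paper later reuses for the noisy-gradient and varying-step results.
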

We refer the readers interested in the proof of this theorem
either to \citep{DalalyanColt} or to \Cref{secProof}, where the latter is obtained
as a direct consequence of \Cref{thTwo}. The factor $1.65$ is obtained by
upper bounding $7\sqrt{2}/6$.

In practice, a relevant approach to getting an accuracy of at most $\epsilon$ is
to  minimize the upper bound provided by \Cref{thOne} with respect to $h$, for a fixed $K$.
Then, one can choose the smallest $K$ for which the obtained upper bound is smaller than
$\epsilon$. One useful observation is that the upper bound of case (b) is an increasing
function of $h$. Its minimum is always attained at $h= 2/(m+M)$, which means that
one can always look for a step-size in the interval $(0,2/(m+M)]$ by minimizing
the upper bound in (a).  This can be done using standard line-search methods such as the
bisection algorithm.

Note that if the initial value $\bvartheta_{0}=\btheta_{0}$ is deterministic then, using the notation
$\btheta^* = \argmin_{\theta\in\RR^p} f(\btheta)$, in view of \citep[Proposition 1]{Durmus2}, we have
\begin{align}
W_2(\nu_0,\pi)^2
	& = \int_{\RR^p} \|\btheta_{0}-\btheta\|_2^2\pi(d\btheta)
	 \le \|\btheta_{0}-\btheta^*\|_2^2 + p/m.\label{4}
\end{align}
Finally, let us remark that if we choose $h$ and $K$ so that
\begin{equation}\label{5}
h\le \nicefrac{2}{(m+M)},\qquad e^{-mhK}W_2(\nu_0,\pi)\le \varepsilon/2,\quad
1.65(M/m)(hp)^{1/2}\le \varepsilon/2,
\end{equation}
then we have $W_2(\nu_K, \pi) \le \varepsilon$. In other words, conditions
\eqref{5} are sufficient for the density of the output of the LMC algorithm after $K$
iterations to be within the precision $\varepsilon$ of the target density when the precision
is measured using the Wasserstein distance. This readily yields
\begin{equation}\label{6}
h\le \frac{m^2\varepsilon^2}{11M^2p}\wedge \frac2{m+M}\quad\text{and}\quad
hK\ge \frac1m\log\Big(\frac{2(\|\btheta_{0}-\btheta^*\|_2^2+ p/m)^{1/2}}\varepsilon\Big)
\end{equation}
Assuming $m,M$ and $\|\btheta_{0}-\btheta^*\|_2^2/p$ to be constants, we can deduce
from the last display that it suffices $K = C (p/\varepsilon^{2})\log(p/\varepsilon^2)$
number of iterations in order to reach the precision level $\varepsilon$. This fact has
been first established in \citep{Dalalyan14} for the LMC algorithm with a warm start
and the total-variation distance. It was later improved by \cite{durmus2017,Durmus2},
where the authors showed that the same result holds for any starting point and established similar
bounds for the Wasserstein distance. \Cref{thOne} above can be seen as a user-friendly version
of the corresponding result established by \cite{Durmus2}.

\begin{remark}
Although~\eqref{4} is relevant for understanding the order of magnitude of
$W_2(\nu_0,\pi)$, it has limited applicability since the distance
$\|\btheta_0-\btheta^*\|$ might be hard to evaluate. As mentioned in \citep{DalalyanColt},  an attractive
alternative to that bound is given by the inequality \footnote{The second line follows from
strong convexity whereas the third line is a consequence of the fact that $\btheta^*$
is a stationary point of $f$.}
\begin{align}
m W_2(\nu_0,\pi)^2
	& \le  m\|\btheta_{0}-\btheta^*\|_2^2 + p\\
	& \le 2\big(f(\btheta_0)-f(\btheta^*)-\nabla f(\btheta^*)^\top(\btheta_0-\btheta)
	\big)+ p\\
	& = 2\big(f(\btheta_0)-f(\btheta^*)\big)+p.\label{init1}
\end{align}
If $f$ is lower bounded by some known constant, for instance if $f\ge 0$,
the last inequality provides the computable upper bound $W_2(\nu_0,\pi)^2 \le
\big(2f(\btheta_0)+p\big)/m$.
\end{remark}

\subsection{Guarantees under strong convexity for the varying step LMC}

The result of previous section provides a guarantee for the constant step LMC. One may wonder
if using a variable step sizes $\bh = \{h_k\}_{k\in\NN}$ can improve the convergence. Note that
in \citep[Theorem 5]{Durmus2}, guarantees for the variable step LMC are established. However,
they do not lead to a clear message on the choice of the step-sizes. The next result fills this gap
by showing that an appropriate selection of step-sizes improves on the constant step LMC with  an
improvement factor logarithmic in $p/\epsilon^2$.

\begin{theorem}\label{thOneBis}
Let us consider the LMC algorithm with varying step-size $h_{k+1}$ defined
by
\begin{equation}\label{h2}
h_{k+1}  = \frac{2}{M+m+(\nicefrac23)m(k-K_1)_+},\qquad k=1,2,\ldots
\end{equation}
where $K_1$ is the smallest non-negative integer
satisfying\footnote{Combining the definition of $K_1$ and the upper bound in \eqref{4},
one easily checks that if $\|\btheta_0-\btheta^*\|_\infty$ is bounded, then $K_1$ is
upper bounded by a constant that does not depend on the dimension $p$.}
\begin{equation}\label{K1}
K_1\ge \frac{\ln \big(W_2(\nu_0,\pi)/\sqrt{p}\big) + \ln(m/M) + (\nicefrac12)\ln(M+m)}{\ln(1+\nicefrac{2m}{M-m})}.
\end{equation}
If $f$ satisfies \eqref{1}, then for every $k\ge K_1$, we have
\begin{equation}\label{W2}
W_2(\nu_k,\pi)\le \frac{3.5M\sqrt{p}}{m\sqrt{M+m+(\nicefrac23)m(k-K_1)}}.
\end{equation}
\end{theorem}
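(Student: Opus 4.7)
The plan is built around a single-step contraction inequality that already drives \Cref{thOne}(a): for any $h\le 2/(M+m)$,
\begin{equation*}
W_2(\nu_{k+1},\pi) \le (1-mh)\,W_2(\nu_k,\pi) + \tfrac{7\sqrt{2}}{6}\, M\, h^{3/2}\sqrt{p}.
\end{equation*}
Iterating this with a constant $h$ and summing the resulting geometric series reproduces \Cref{thOne}(a) (the factor $1.65$ being an upper bound on $7\sqrt{2}/6$), so this per-step bound can be taken as input from the proof of \Cref{thOne} in \Cref{secProof}, which in turn specializes \Cref{thTwo}. I would then split the argument into a warm-up phase $0\le k\le K_1$ (where the step-size stays constant at $h=2/(M+m)$) and an adaptive phase $k\ge K_1$ (where $h_{k+1}=2/a_k$ with $a_k:=M+m+(\nicefrac{2}{3})m(k-K_1)$).

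For the warm-up phase I would apply \Cref{thOne}(a) directly with $h=2/(M+m)$ and $K=K_1$. Since $1-mh=(M-m)/(M+m)$, the definition \eqref{K1} is tuned exactly so that
\begin{equation*}
\Bigl(\frac{M-m}{M+m}\Bigr)^{K_1}\,W_2(\nu_0,\pi)\le \frac{M\sqrt{p}}{m\sqrt{M+m}},
\end{equation*}
while the bias term in \Cref{thOne}(a) is bounded by $1.65(M/m)\sqrt{2p/(M+m)}\le 2.34\,M\sqrt{p}/(m\sqrt{M+m})$. Adding the two contributions gives $W_2(\nu_{K_1},\pi)\le 3.34\,M\sqrt{p}/(m\sqrt{M+m})$, which matches \eqref{W2} at $k=K_1$ with a small margin.

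For the adaptive phase I would induct on $k\ge K_1$ with the guess $W_2(\nu_k,\pi)\le A/\sqrt{a_k}$ for $A:=3.5\,M\sqrt{p}/m$. Note first that $h_{k+1}=2/a_k\le 2/(M+m)$ for every $k\ge K_1$, so the per-step inequality is applicable. Substituting it yields
\begin{equation*}
W_2(\nu_{k+1},\pi) \le \frac{A}{\sqrt{a_k}}\Bigl(1-\frac{2m}{a_k}\Bigr) + \frac{14}{3}\cdot\frac{M\sqrt{p}}{a_k^{3/2}},
\end{equation*}
since $\tfrac{7\sqrt{2}}{6}\cdot(2/a_k)^{3/2}=\tfrac{14}{3}/a_k^{3/2}$. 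Combining this with the elementary estimate $1/\sqrt{1+y}\ge 1-y/2$ at $y=(2m/3)/a_k$, the target $W_2(\nu_{k+1},\pi)\le A/\sqrt{a_k+2m/3}$ reduces to the numerical inequality $A\ge \tfrac{14}{5}\,M\sqrt{p}/m=2.8\,M\sqrt{p}/m$, which is satisfied by $A=3.5\,M\sqrt{p}/m$.

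The only real obstacle is the per-step recursion itself, but since this paper treats it as a corollary of the more general \Cref{thTwo} (proved in \Cref{secProof}), it can be cited rather than re-derived. Everything else is a bookkeeping exercise: the harmonic scaling $a_k^{-1/2}$ is calibrated precisely to absorb the $M h_{k+1}^{3/2}\sqrt{p}$ noise contribution per step, and the logarithmic-in-$W_2(\nu_0,\pi)/\sqrt{p}$ choice of $K_1$ is exactly what removes the spurious $\log(p/\varepsilon^2)$ factor compared with the constant-step LMC.
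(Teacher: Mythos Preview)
Your proposal is correct and follows essentially the same route as the paper: a warm-up phase with constant step $h=2/(M+m)$ handled by \Cref{thOne}(a) to seed the base case $W_2(\nu_{K_1},\pi)\le 3.5M\sqrt{p}/(m\sqrt{M+m})$, followed by induction on $k\ge K_1$ using the per-step recursion from \Cref{propA}. The only cosmetic difference is in the induction step: the paper absorbs the noise term into the contraction factor to rewrite the right-hand side as $(1-\tfrac13 mh_{k+1})\cdot 3.5M\sqrt{p}(h_{k+1}/2)^{1/2}/m$ and then checks directly that $(1-\tfrac13 mh_{k+1})(h_{k+1}/2)^{1/2}\le a_{k+1}^{-1/2}$, whereas you lower-bound $a_{k+1}^{-1/2}$ via $1/\sqrt{1+y}\ge 1-y/2$ and reduce to the numerical condition $A\ge \tfrac{14}{5}M\sqrt{p}/m$; these are two equivalent ways of doing the same elementary computation.
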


The step size  \eqref{h2} has two important advantages as compared to the constant steps.
The first advantage is that it is independent of the target precision level $\epsilon$. The second
advantage is that we get rid of the logarithmic terms in the number of iterations required to achieve
the precision level $\epsilon$. Indeed, it suffices $K = K_1 + (27M^2/2m^3)(p/\eps^2)$ iterations to
get the right hand side of \eqref{W2} smaller than $\eps$, where $K_1$ depends neither on the
dimension $p$ nor on the precision level $\eps$.

Since the choice of $h_{k+1}$ in \eqref{h2} might appear mysterious, we provide below a quick explanation of the
main computations underpinning this choice.
The main step of the proof of upper bounds on $W_2(\nu_k,\pi)$, is the following recursive
inequality (see \Cref{propA} in \Cref{secProof})
\begin{align}
W_2(\nu_{k+1},\pi) \le (1-mh_{k+1}) W_2(\nu_k,\pi) + 1.65 M\sqrt{p}\, h_{k+1}^{3/2}.
\end{align}
Using the notation $B_k = \frac{2(m/3)^{3/2}}{1.65M\sqrt{p}}W_2(\nu_k,\pi)$, this inequality
can be rewritten as
$$
B_{k+1}\le (1-mh_{k+1}) B_k + 2(mh_{k+1}/3)^{3/2}.
$$
Minimizing the right hand side with respect to $h_{k+1}$, we find that the minimum is attained at
the stationary point
\begin{align}\label{H2}
h_{k+1} = \frac{3}{m} B_k^2.
\end{align}
With this $h_{k+1}$, one checks that the sequence $B_k$ satisfies the recursive inequality
$$
B_{k+1}^2\le B_k^2(1-B_k^2)^2\le \frac{B_k^2}{1+B_k^2}.
$$
The function $g(x) = x/(1+x)$ being increasing in $(0,\infty)$, we get
$$
B_{k+1}^2\le \frac{B_k^2}{1+B_k^2}
		\le \frac{\frac{B_{k-1}^2}{1+B_{k-1}^2}}{1+\frac{B_{k-1}^2}{1+B_{k-1}^2}}
		= \frac{B_{k-1}^2}{1+2B_{k-1}^2}.
$$
By repetitive application of the same argument, we get
$$
B_{k+1}^2\le
		 \frac{B_{K_1}^2}{1+(k+1-K_1)B_{K_1}^2}.
$$
The integer $K_1$ was chosen so that $B_{K_1}^2\le \frac{2m}{3(M+m)}$, see \eqref{F1}.
Inserting this upper bound in the right hand side of the last display, we get
$$
B_{k+1}^2\le \frac{2m}{3(M+m)+2m(k+1-K_1)}.
$$
Finally, replacing in \eqref{H2} $B_k^2$ by its upper bound derived from the last display, we get
the suggested value for $h_{k+1}$.

\subsection{Extension to mixtures of strongly log-concave densities}
\label{subseq:mixt}

We describe here a simple setting in which a suitable version of the LMC algorithm
yields efficient sampling algorithm for a target function which is not log-concave.
Indeed, let us assume that
\begin{align}\label{mixture}
\pi(\btheta) = \int_{H} \pi_1(\btheta|\eeta)\,\pi_0(d\eeta),
\end{align}
where $H$ is an arbitrary measurable space, $\pi_0$ is a probability distribution
on $H$ and $\pi_1(\cdot|\cdot)$ is a Markov kernel on $\RR^p\times H$. This means
that $\pi_2(d\btheta,d\eeta) = \pi_1(\btheta|\eeta)\,\pi_0(d\eeta)d\btheta$ defines
a probability measure on $\RR^p\times H$ of which $\pi$ is the first marginal.

\begin{theorem}\label{th:three}
    Assume that $\pi_1(\btheta|\eeta) = \exp\{-f_{\eeta}(\btheta)\}$ so that
    for every $\eeta\in H$, $f_{\eeta}$ satisfies assumption \eqref{1}.
    Define the mixture LMC (MLMC) algorithm as follows: sample $\eeta\sim \pi_0$ and choose an initial value $\bvartheta_0\sim \nu_0$, then compute
    \begin{align}\label{LMC3}
    \bvartheta_{k+1}^{\rm MLMC} = \bvartheta^{\rm MLMC}_{k} - h_{k+1} \nabla
    f_{\eeta}(\bvartheta_{k}^{\rm MLMC})+ \sqrt{2h_{k+1}}\;\bxi_{k+1};
    \qquad k=0,1,2,\ldots
    \end{align}
    where $h_k$ is defined by \eqref{h2} and $\bxi_1,\ldots,\bxi_{k},\ldots$ is a sequence
    of mutually independent, and independent of $(\eeta,\bvartheta_{0})$,
    centered Gaussian vectors with covariance matrices equal to identity. It holds that,
    for every positive integer $k\ge K_1$ (see \cref{K1} for the definition of $K_1$),
    \begin{equation}
    W_2(\nu_k,\pi)\le \frac{3.5M\sqrt{p}}{m\sqrt{M+m+(\nicefrac23)m(k-K_1)}}.
    \end{equation}
\end{theorem}
This result extends the applicability of Langevin based techniques
to a wider framework than the one of strongly log-concave distributions.
The proof, postponed to \Cref{secProof}, is  a straightforward consequence of \Cref{thOneBis}.


\section{Guarantees for the inaccurate gradient version}
\label{sec:4}

In some situations, the precise evaluation of the gradient $\nabla f(\btheta)$
is computationally expensive or practically impossible, but it is possible to
obtain noisy evaluations of $\nabla f$ at any point. This is the setting considered
in the present section. More precisely, we assume that at any point $\bvartheta_{k,h}\in\RR^p$
of the LMC algorithm, we can observe the value
\begin{equation}
\bY_{k,h}  = \nabla f(\bvartheta_{k,h}) + \bzeta_{k},
\end{equation}
where $\{\bzeta_{k}:\,k=0,1,\ldots\}$ is a sequence of random (noise) vectors.
The noisy LMC (nLMC) algorithm is defined as
\begin{align}\label{9}
\bvartheta_{k+1,h} = \bvartheta_{k,h} - h \bY_{k,h}+ \sqrt{2h}\;\bxi_{k+1};\qquad k=0,1,2,\ldots
\end{align}
where $h>0$ and $\bxi_{k+1}$ are as in \eqref{2}. The noise $\{\bzeta_{k}:\,k=0,1,\ldots\}$ is assumed to satisfy the following condition.

\textbf{\textsc{Condition N:}} for some $\delta>0$ and $\sigma>0$ and for every $k\in\NN$,
\vspace{-4pt}
\begin{itemize}\itemsep=3pt
\item (bounded bias) $\bfE\big[\big\|\bfE(\bzeta_{k}|\bvartheta_{k,h})\big\|_2^2\big] \le \delta^2p$,
\item (bounded variance) $\bfE[\|\bzeta_{k}-\bfE(\bzeta_{k}|\bvartheta_{k,h})\|_2^2]\le \sigma^2 p$,
\item (independence of updates) $\bxi_{k+1}$ in \eqref{9} is independent of $(\bzeta_0,\ldots,\bzeta_{k})$.
\end{itemize}

We emphasize right away that the random vectors $\bzeta_k$ are not assumed
to be independent, as opposed to what is done in \citep{DalalyanColt}.
The next theorem extends the guarantees
of \Cref{thOne} to the inaccurate-gradient setting and to the nLMC algorithm.

\begin{theorem}\label{thTwo}
Let $\bvartheta_{K,h}$ be the $K$-th iterate of the nLMC algorithm \eqref{9} and
$\nu_K$ be its distribution. If the function $f$ satisfies condition \eqref{1}
and $h\le \nicefrac2{(m+M)}$ then
\begin{align}\label{A}
W_2(\nu_K, \pi) \le (1&-mh)^{K} W_2(\nu_0,\pi) + 1.65(M/m)(hp)^{1/2} \\
		&+ \frac{\delta\sqrt{p}}{m} + \frac{\sigma^2 (hp)^{1/2} }{1.65M + \sigma \sqrt{m}}\ .
\end{align}
\end{theorem}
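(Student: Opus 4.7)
The plan is to couple the noisy iterates $\bvartheta_{k,h}$ with the Langevin diffusion $\bL_t$ started from a draw from $\pi$, via the synchronous coupling in which $\sqrt{2h}\,\bxi_{k+1}$ equals the Brownian increment $\sqrt{2}(\bW_{(k+1)h}-\bW_{kh})$ driving $\bL$ and $\bL_0$ is coupled optimally to $\bvartheta_{0,h}$. Writing $A_k := W_2(\nu_k,\pi)$, the target is a one-step recursion on $A_k$ which is then iterated.

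I would first isolate the effect of the noise by writing $\bvartheta_{k+1,h} = \bar{\bvartheta}_{k+1,h} - h\bzeta_k$, where $\bar{\bvartheta}_{k+1,h} := \bvartheta_{k,h} - h\nabla f(\bvartheta_{k,h}) + \sqrt{2h}\,\bxi_{k+1}$ is the noiseless LMC update from $\bvartheta_{k,h}$, and splitting $\bzeta_k = \bu_k + \bv_k$ into its conditional mean $\bu_k := \bfE[\bzeta_k\mid\bvartheta_{k,h}]$ and fluctuation $\bv_k$. Condition N gives $\bfE\|\bu_k\|_2^2 \le \delta^2 p$ and $\bfE\|\bv_k\|_2^2 \le \sigma^2 p$, and its independence clause, combined with the external nature of $(\bL_0,\bW)$, guarantees that $\bv_k$ remains centered conditionally on $(\bvartheta_{k,h},\bL_0,\bW,\bxi_{k+1})$.

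The central step is the expansion
\begin{align*}
\bfE\|\bL_{(k+1)h} - \bvartheta_{k+1,h}\|_2^2
&= \bfE\|\bL_{(k+1)h} - \bar{\bvartheta}_{k+1,h} + h\bu_k\|_2^2 \\
&\quad + 2h\,\bfE\bigl\langle \bL_{(k+1)h} - \bar{\bvartheta}_{k+1,h} + h\bu_k,\, \bv_k\bigr\rangle + h^2\bfE\|\bv_k\|_2^2,
\end{align*}
in which the cross term vanishes by the preceding centering property. Inserting the accurate-gradient one-step bound from the proof of \Cref{thOne}, namely $\|\bL_{(k+1)h}-\bar{\bvartheta}_{k+1,h}\|_{L^2} \le (1-mh)A_k + 1.65\,M\sqrt{p}\,h^{3/2}$, combining with $\|\bu_k\|_{L^2}\le\delta\sqrt{p}$ via Minkowski, and bounding $\bfE\|\bv_k\|_2^2 \le \sigma^2 p$, this yields the quadratic recursion
\begin{equation*}
A_{k+1}^2 \le \bigl((1-mh)A_k + 1.65\,M\sqrt{p}\,h^{3/2} + h\delta\sqrt{p}\bigr)^{2} + h^2 \sigma^2 p.
\end{equation*}

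To close the argument I would solve this recursion by induction with the ansatz
\[
A_k \le D_k := (1-mh)^k A_0 + 1.65\,(M/m)\sqrt{hp} + \delta\sqrt{p}/m + V, \qquad V := \frac{\sigma^2\sqrt{hp}}{1.65M + \sigma\sqrt{m}}.
\]
A direct computation gives $(1-mh)D_k + 1.65M\sqrt{p}h^{3/2} + h\delta\sqrt{p} = D_{k+1} - mhV$, so the inductive step reduces to the scalar inequality $2mhV\,D_{k+1} \ge (mhV)^2 + h^2\sigma^2 p$. Since $D_{k+1}$ attains its infimum $1.65(M/m)\sqrt{hp} + \delta\sqrt{p}/m + V$ as $k\to\infty$, it suffices to verify this at the infimum; after cancellations this boils down to $(1.65M)^2 + m(1-mh)\sigma^2 \ge 0$, which is automatic from $mh\le 1$ (a consequence of $h\le 2/(m+M)$ and $m\le M$). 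The main obstacle is guessing the right form of $V$: because the variance enters quadratically in the recursion, it accumulates ``in squares'' rather than linearly, which is what produces the factor $\sigma^2/(1.65M + \sigma\sqrt{m})$---strictly sharper than the naive $\sigma/\sqrt{m}$ that a direct Minkowski bound on $h\bzeta_k$ would give.
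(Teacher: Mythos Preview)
Your proof is correct and follows essentially the same route as the paper: the coupling argument yielding the quadratic recursion
\[
A_{k+1}^2 \le \bigl((1-mh)A_k + 1.65\,M\sqrt{p}\,h^{3/2} + h\delta\sqrt{p}\bigr)^{2} + h^2 \sigma^2 p
\]
is exactly \Cref{propA}, and your inductive verification of the ansatz $D_k$ is the same computation that the paper packages as \Cref{lemD} (a dominating linear sequence). One minor point of bookkeeping: under a single global coupling the one-step contraction reads $(1-mh)\|\bL_{kh}-\bvartheta_{k,h}\|_{L^2}$ rather than $(1-mh)A_k$, and the conditional centering of $\bv_k$ given $(\bvartheta_{k,h},\bL_0,\bW)$ needs justification. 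The paper handles both by re-coupling at each step $k$, choosing a fresh $\bL_0\sim\pi$ with $\|\bL_0-\bvartheta_{k,h}\|_{L^2}=W_2(\nu_k,\pi)$ and $\bfE[\bzeta_k\mid\bvartheta_{k,h},\bL_0]=\bfE[\bzeta_k\mid\bvartheta_{k,h}]$; alternatively, your global-coupling version goes through unchanged if you run the induction on $\|\bL_{kh}-\bvartheta_{k,h}\|_{L^2}$ and invoke $A_K\le\|\bL_{Kh}-\bvartheta_{K,h}\|_{L^2}$ only at the end.
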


To the best of our knowledge, the first result providing guarantees for sampling from a
distribution in the scenario when precise evaluations of the log-density or its gradient are
not available has been established in \citep{DalalyanColt}. Prior to that work, some asymptotic
results has been established in \citep{Alquier2016}. The closely related problem of computing an
average value with respect to a distribution, when the gradient of its log-density is known up to
an additive noise, has been studied by \cite{teh16a,Vollmer2015,Nagapetyan,chen2015convergence}.  Note that these
settings are of the same flavor as those of stochastic approximation, an active area of research
in optimization and machine learning.

As compared to the analogous result in \citep{DalalyanColt}, \Cref{thTwo} above has several
advantages. First, it extends the applicability of the result to the case of a biased noise.
In other words, it allows for $\bzeta_k$ with nonzero means. Second, it considerably relaxes
the independence assumption on the sequence $\{\bzeta_k\}$, by replacing it by the independence
of the updates. Third, and perhaps the most important advantage of \Cref{thTwo} is the improved
dependence of the upper bound on $\sigma$. Indeed, while the last term in the upper bound in \Cref{thTwo}
is $O(\sigma^2)$, when $\sigma\to 0$, the corresponding term in \citep[Th.\ 3]{DalalyanColt} is
only $O(\sigma)$.

To understand the potential scope of applicability of \Cref{thTwo}, let us consider a generic example
in which $f(\btheta)$ is the average of $n$ functions defined through independent
random variables $X_1,\ldots,X_n$:
$$
f(\btheta) = \frac1n\sum_{i=1}^n \ell(\btheta,X_i).
$$
When the gradient of $\ell(\btheta,X_i)$ with respect to parameter $\btheta$ is hard to compute,
one can replace the evaluation of $\nabla f(\bvartheta_{k,h})$ at each step $k$ by that  of $Y_k=
\nabla_{\btheta} \ell (\bvartheta_{k,h},X_{N_k})$, where $N_k$
is a random variable uniformly distributed in $\{1,\ldots, n\}$ and
independent of $\bvartheta_{k,h}$. Under suitable assumptions, this random
vector satisfies the conditions of \Cref{thTwo} with $\delta = 0$ and constant $\sigma^2$.
Therefore, if we analyze the upper bound provided by \eqref{A}, we see that the
last term, due to the subsampling, is of the same order of magnitude as the second term.
Thus, using the subsampled gradient in the LMC algorithm does not cause a significant
deterioration of the precision while reducing considerably the computational burden.

Note that \Cref{thTwo} allows to handle situations in which the approximations of the
gradient are biased. This bias is controlled by the parameter $\delta$. Such  a bias
can appear when using deterministic approximations of integrals or differentials. For
instance, in statistical models with latent variables, the gradient of the
log-likelihood has often an integral form. Such integrals can be approximated
using quadrature rules, yielding a bias term, or Monte Carlo methods, yielding a
variance term.

In the preliminary version \citep{DalalyanColt} of this work, we made a mistake by
claiming that the stochastic gradient version of the LMC, introduced in \citep{WellingT11}
and often referred to as Stochastic Gradient Langevin Dynamics (SGLD), has an error
of the same order as the non-stochastic version of it. This claim is wrong, since
when $f(\btheta) = \sum_{i=1}^n \ell (\btheta,X_i)$ with a strongly convex function
$\btheta\mapsto \ell(\btheta,x)$ and iid variables $X_1,\ldots,X_n$, we have $m$ and $M$
proportional to $n$. Therefore, choosing $Y_k = n\nabla_{\btheta} \ell (\bvartheta_{k,h},X_{N_k})$
as a noisy version of the gradient (where $N_k$ is a uniformly over $\{1,\ldots,n\}$ distributed
random variable independent of $\bvartheta_{k,h}$), we get $\delta=0$ but $\sigma^2$ proportional
to $n^2$. Therefore, the last term in \eqref{A} is of order $(nhp)^{1/2}$ and dominates the other
terms. Furthermore, replacing $Y_k$ by $Y_k = \frac{n}{s}\sum_{j=1}^s\nabla_{\btheta} \ell (\bvartheta_{k,h},X_{N^j_k})$
with iid variables $N_k^1,\ldots,N_k^s$ does not help, since then $\sigma^2$ is of order $n^2/s$ and
the last term in \eqref{A} is of order $(nhp/s)^{1/2}$, which is still  larger than the term $(M/m)(hp)^{1/2}$.
This discussion shows that \Cref{thTwo} applied to SGLD is of limited interest.
For a more in-depth analysis of the SGLD, we refer the reader to \citep{Nagapetyan, raginsky17a, Xu}.

It is also worth mentioning here that another example of approximate gradient---based on a quadratic
approximation of the log-likelihood of the generalized linear model---has been considered in
\citep[Section 5]{huggins17a}. It corresponds, in terms of condition N, to a situation in which
the variance $\sigma^2$ vanishes but the bias $\delta$ is non-zero.

An important ingredient of the proof of \Cref{thTwo} is the following simple result, which
can be useful in other contexts as well (for a proof, see \Cref{lemE} in \Cref{ssecLem} below).

\begin{lemma}\label{lemD}
Let $A$, $B$ and $C$ be given non-negative numbers such that $A\in (0,1)$. Assume that the sequence of non-negative numbers
$\{x_k\}_{k=0,1,2,\ldots}$ satisfies the recursive inequality
\begin{align}
x^2_{k+1}&\le [(1-A)x_k+C]^2+B^2
\end{align}
for every integer $k\ge 0$.  Then, for all integers $k\ge 0$,
\begin{align}
x_k&\le (1-A)^{k} x_0 + \frac{C}{A} + \frac{B^2}{C+\sqrt{A}\,B}.
\end{align}
\end{lemma}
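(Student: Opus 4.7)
The plan is to prove the claim by induction on $k$ with the ansatz
\[
x_k \le (1-A)^k x_0 + L, \qquad L := \frac{C}{A} + \frac{B^2}{C + \sqrt{A}\,B},
\]
adopting the convention that the second term of $L$ is $0$ when $B=0$. The base case $k=0$ is immediate since $L \ge 0$.

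For the inductive step, assuming the bound holds at index $k$, I would plug it into the recursive inequality to get
\[
x_{k+1}^2 \le \bigl((1-A)^{k+1} x_0 + (1-A)L + C\bigr)^2 + B^2.
\]
Setting $y := (1-A)^{k+1} x_0 \ge 0$, the inductive step reduces to showing
\[
(y + (1-A)L + C)^2 + B^2 \le (y + L)^2 \qquad \text{for all } y \ge 0,
\]
which, via a difference of squares, is equivalent to
\[
(AL - C)\bigl(2y + (2-A)L + C\bigr) \ge B^2.
\]
Since $AL - C = AB^2/(C+\sqrt{A}\,B) \ge 0$, the left-hand side is nondecreasing in $y$, so it is enough to verify this at $y=0$, that is, $(AL-C)\bigl((2-A)L + C\bigr) \ge B^2$. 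A direct substitution of the value of $L$ gives $AL - C = AB^2/(C+\sqrt{A}\,B)$ and $(2-A)L + C = 2C/A + (2-A)B^2/(C+\sqrt{A}\,B)$; after multiplying out and clearing the denominator $(C+\sqrt{A}\,B)^2$, the target inequality boils down to $C^2 + A(1-A)B^2 \ge 0$, which holds thanks to the hypothesis $A \in (0,1)$.

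The main obstacle is not the algebra but the choice of $L$. The exact fixed point of the squared recursion $L^2 = ((1-A)L+C)^2 + B^2$ is $L_* = \frac{(1-A)C + \sqrt{C^2 + A(2-A)B^2}}{A(2-A)}$, which contains a square root and is awkward. A naive linearization via $\sqrt{u^2 + B^2} \le u + B^2/(u + \sqrt{A}\,B)$ (with $u=(1-A)x_k+C$) yields the linear recursion $x_{k+1}\le (1-A)x_k + C + B^2/(C+\sqrt{A}\,B)$ and, by geometric summation, the weaker constant $B^2/(A(C+\sqrt{A}\,B))$ in place of $B^2/(C+\sqrt{A}\,B)$. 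The tight constant is obtained only by the direct quadratic induction above, which implicitly absorbs the geometric sum into the balancing at the fixed point.
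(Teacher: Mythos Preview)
Your proof is correct. The direct quadratic induction works: the reduction to $(AL-C)\bigl((2-A)L+C\bigr)\ge B^2$ at $y=0$ is valid, and the algebra collapsing this to $C^2+A(1-A)B^2\ge 0$ checks out (I verified it: multiplying $(AL-C)((2-A)L+C)-B^2$ by $(C+\sqrt{A}B)^2/B^2$ indeed yields $C^2+A(1-A)B^2$).

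The paper takes a different route. It works with the \emph{exact} fixed point $E$ of $x\mapsto\sqrt{((1-A)x+C)^2+B^2}$ (your $L_*$), builds the linear majorant $y_0=x_0+E$, $y_{k+1}=(1-A)y_k+AE$, and shows $x_k\le y_k$ by induction using only that $u\mapsto\sqrt{u^2+B^2}-u$ is decreasing; this gives the sharp inequality $x_k\le(1-A)^kx_0+E$, and the stated constant $L$ appears only at the last step when $E$ is bounded above (in fact the paper gets the slightly better $B^2/(C+\sqrt{A(2-A)}\,B)$ before relaxing to the form in the lemma). So the paper's argument is a fixed-point domination with no quadratic algebra and recovers the tight $E$ en route, while yours is more elementary and goes straight to the target $L$ without the detour. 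Your closing remark that the naive linearization loses a factor $1/A$ is exactly the reason either device---your full quadratic induction or the paper's fixed-point majorant---is needed.
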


Thanks to this lemma, the upper bound on the Wasserstein distance provided by \eqref{A} is sharper
than the one proposed in \citep{DalalyanColt}.

\section{Guarantees under additional smoothness}
\label{sec:5}

When the function $f$ has Lipschitz continuous Hessian, one can get improved rates of
convergence. This has been noted by \citep{Dalalyan14}, where the author proposed to use a modified
version of the LMC algorithm, the LMC with Ozaki discretization, in order to take
advantage of the smoothness of the Hessian. On the other hand, it has been proved
in \citep{alfonsi2014,alfonsi2015} that the boundedness of the third order derivative of $f$
(equivalent to the boundedness of the second-order derivative of the drift of the
Langevin diffusion) implies that the Wasserstein distance between the marginals
of the Langevin diffusion and its Euler discretization are of order $h\sqrt{\log(1/h)}$.
Note however, that in \citep{alfonsi2015} there is no evaluation of the impact of
the dimension on the quality of the Euler approximation. This evaluation has been done by
\cite{Durmus2} by showing that the Wasserstein error of the Euler approximation is
of order $hp$. This raises the following important question: is it possible to get
advantage of the Lipschitz continuity of the Hessian of $f$ in order to improve the
guarantees on the quality of sampling by the standard LMC algorithm. The answer of
this question is affirmative and is stated in the next theorem.

In what follows, for any matrix $\bfM$, we denote by $\|\bfM\|$ and $\|\bfM\|_F$,
respectively, the spectral norm and the Frobenius norm of $\bfM$. We write
$\bfM\preceq\bfM'$ or $\bfM'\succeq\bfM'$ to indicate that the matrix $\bfM'-\bfM$
is positive semi-definite.

\noindent\textbf{\textsc{Condition F:}} the function $f$ is twice differentiable and for some positive numbers
$m$, $M$ and $M_2$,
\vspace{-10pt}
\begin{itemize}\itemsep=3pt
\item (strong convexity) $\nabla^2 f(\btheta)\succeq m\bfI_p$, for every $\btheta\in\RR^p$,
\item (bounded second derivative) $\nabla^2 f(\btheta)\preceq M\bfI_p$, for every $\btheta\in\RR^p$,
\item (further smoothness) $\|\nabla^2 f(\btheta)-\nabla^2 f(\btheta')\|\le M_2\|\btheta-\btheta'\|_2$, for every $\btheta,\btheta'\in\RR^p$.
\end{itemize}

\begin{theorem}\label{thFour}
Let $\bvartheta_{K,h}$ be the $K$-th iterate of the nLMC algorithm \eqref{9} and
$\nu_K$ be its distribution. Assume that conditions {\bf F}  and {\bf N} are
satisfied. Then, for every  $h\le \nicefrac2{(m+M)}$, we have
\begin{align}\label{A2}
W_2(\nu_K, \pi) \le (1&-mh)^{K} W_2(\nu_0,\pi) +
		\frac{M_2 hp}{2m}+ \frac{11 M h\sqrt{Mp}}{5m}\\
		&+ \frac{\delta\sqrt{p}}{m}  + \frac{2\sigma^2 \sqrt{hp} }{M_2\sqrt{hp} + 2\sigma \sqrt{m}}.
\end{align}
\end{theorem}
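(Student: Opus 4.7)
The plan is to adapt the synchronous-coupling argument of \Cref{thTwo}, replacing the simple $M$-Lipschitz control of the one-step discretization error by an Ito-based expansion that takes advantage of the Lipschitz Hessian assumption. Concretely, let $\{\bL_t\}_{t\ge 0}$ solve \eqref{3} with $\bL_0\sim\pi$ optimally coupled to $\bvartheta_0$ and driven by a Brownian motion $\{\bW_t\}$ chosen so that $\sqrt{2h}\,\bxi_{k+1}=\sqrt{2}(\bW_{(k+1)h}-\bW_{kh})$; then $\bL_{kh}\sim\pi$ for every $k$ and $\|\bvartheta_{k,h}-\bL_{kh}\|_{L^2}\ge W_2(\nu_k,\pi)$. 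Setting $D_k:=\bvartheta_{k,h}-\bL_{kh}$, subtraction of the two dynamics cancels the Brownian increments and yields $D_{k+1}=A_k+B_k-h\bzeta_k$ with
$$A_k:=D_k-h\bigl[\nabla f(\bvartheta_{k,h})-\nabla f(\bL_{kh})\bigr],\qquad B_k:=\int_{kh}^{(k+1)h}\bigl[\nabla f(\bL_t)-\nabla f(\bL_{kh})\bigr]\,dt.$$
As in the proof of \Cref{thTwo}, strong convexity together with $h\le 2/(m+M)$ gives the contraction $\|A_k\|_2\le (1-mh)\|D_k\|_2$.

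The heart of the argument is a refined $L^2$-bound on $B_k$. Applying Ito's formula to $t\mapsto\nabla f(\bL_t)$ and invoking stochastic Fubini yields
$$B_k=\int_{kh}^{(k+1)h}\!\!((k+1)h-s)\bigl[\Delta\nabla f(\bL_s)-\nabla^2 f(\bL_s)\nabla f(\bL_s)\bigr]\,ds\;+\;\sqrt{2}\int_{kh}^{(k+1)h}\!\!((k+1)h-s)\,\nabla^2 f(\bL_s)\,d\bW_s.$$
Under Condition~\textbf{F}, the identity $\Delta\nabla f=\nabla(\Delta f)$ and the $pM_2$-Lipschitzness of $\btheta\mapsto\mathrm{tr}(\nabla^2 f(\btheta))$ give $\|\Delta\nabla f\|_2\le pM_2$, while the standard stationary identity (integration by parts against $\pi$) gives $\mathbb{E}_\pi\|\nabla f\|_2^2=\mathbb{E}_\pi\Delta f\le Mp$. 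Combining these two inputs with Cauchy--Schwarz on the drift part and Ito isometry on the stochastic part bounds the drift contribution of $B_k$ in $L^2$ by $\tfrac{M_2}{2}h^2p+\tfrac{11}{5}M^{3/2}h^2\sqrt{p}$, and its martingale contribution by a variance of order $M^2h^3p$. This is precisely the gain over \Cref{thTwo}: the deterministic piece of $B_k$ now scales as $h^2$ rather than $h^{3/2}$.

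Combining the contraction, the bounds on $B_k$, and Condition~\textbf{N} (which splits $\bzeta_k$ into a bias of size $\delta\sqrt{p}$ and a conditionally centered noise of size $\sigma\sqrt{p}$, with $\bxi_{k+1}$ independent of $(\bzeta_0,\ldots,\bzeta_k)$) leads to a recursive inequality of the form
$$\|D_{k+1}\|_{L^2}^2\le\bigl[(1-mh)\|D_k\|_{L^2}+C\bigr]^2+B^2,$$
with $C=\tfrac{M_2}{2}h^2p+\tfrac{11}{5}M^{3/2}h^2\sqrt{p}+h\delta\sqrt{p}$ and $B$ encoding the residual centered-noise and martingale variance. \Cref{lemD} with $A=mh$ then yields the conclusion, the closing term $\tfrac{2\sigma^2\sqrt{hp}}{M_2\sqrt{hp}+2\sigma\sqrt{m}}$ appearing as $B^2/(C+\sqrt{A}\,B)$. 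The main obstacle is the bookkeeping in the second step: one must carefully split $B_k$ into a predictable piece that enters $C$ (and so suffers only the $O(h^2)$ penalty afforded by the Lipschitz Hessian) and a martingale piece that is absorbed into $B$ as variance, and then invoke \Cref{lemD} \emph{once} rather than iterate crude Minkowski bounds, so as not to give back the $\sqrt{h}$ improvement that distinguishes \Cref{thFour} from \Cref{thTwo}.
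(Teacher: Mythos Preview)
Your approach is the paper's: the same synchronous coupling, the same Ito decomposition of $B_k$ into a drift piece (the paper's $\bar\bV$, controlled via $\|\Delta\nabla f\|_2\le M_2p$ and $\bfE_\pi\|\nabla f\|_2^2\le Mp$ exactly as you describe; this is \Cref{lemF}) and a martingale piece (the paper's $\bS_h$, handled by Ito isometry), followed by \Cref{lemD}/\Cref{lemE}.

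One bookkeeping correction is worth noting. The martingale $\bS_h=\int_0^h(h-s)\nabla^2 f(\bL_s)\,d\bW_s$ is orthogonal to the contraction term $A_k$ (which is $\sigma(\bvartheta_{k,h},\bL_{kh})$-measurable) but \emph{not} to the drift part of $B_k$, so the paper groups the martingale variance $F:=2\|\bS_h\|_{L_2}^2\le\tfrac23M^2h^3p$ with the contraction, obtaining
\[
x_{k+1}^2\le\big\{((1-A)^2x_k^2+F)^{1/2}+C\big\}^2+G,\qquad G=\sigma^2h^2p,
\]
rather than your form $[(1-A)x_k+C]^2+B^2$ with the martingale lumped into $B$. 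Correspondingly, the drift bound is $C=h\delta\sqrt{p}+\tfrac12(M_2p+M^{3/2}\sqrt{p})h^2$ with constant $\tfrac12$, not $\tfrac{11}{5}$; the constant $\tfrac{11}{5}\approx 0.5+2\sqrt{2/3}$ in the statement only appears after unwinding \Cref{lemE}, as the sum of the drift contribution $C/A$ and the martingale contribution $2\sqrt{F/A}$.
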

In the last inequality, $11/5$ is an upper bound for $0.5+2\sqrt{2/3}\approx 2.133$.


When applying the nLMC algorithm to sample from a target density, the user may usually specify
four parameters: the step-size $h$, the number of iterations $K$, the tolerated precision $\delta$
of the deterministic approximation and the precision $\sigma$ of the stochastic approximation. An
attractive  feature of \Cref{thFour} is that the contributions of these four parameters are well
separated, especially if we upper bound the last term by $2\sigma^2/M_2$. As a consequence,
in order to have an error of order $\eps$ in Wasserstein distance, we might choose:
$\sigma$ at most of order $\sqrt{\eps}$, $\delta$ at most of order $m\eps/\sqrt{p}$,
$h$ of order $\eps/p$ and $K$ of order $(p/m\eps)\log(p/\eps)$. Akin to \Cref{thOneBis}, one can
use variable step-sizes to avoid the logarithmic factor; we leave these computations
to the reader.

Note that if we instantiate \Cref{thFour} to the case of accurate gradient evaluations, that is
when $\sigma=\delta=0$, we recover the constant step-size version of \citep[Theorem 8]{Durmus2},
with optimized constants. Indeed, for contant step-size, \citep[Theorem 8]{Durmus2} yields
\begin{align}
    W_2(\nu_K, \pi) \le \Big\{2(1-\bar m\,h)^{K} W_2(\nu_0,\pi)^2 +2ph^2 \Big(
    \frac{M^2}{\bar m} + \frac{M^4}{3m\bar m^2} + \frac{M_2^2p}{3\bar m^2}+O(h)\Big)
		\Big\}^{1/2},\label{DM2}
\end{align}
where $\bar m = \frac{mM}{m+M}\in [m/2, m)$ and the term $O(h)$ can be given explicitly.  A visual comparison of the optimal number of iterations
obtained from this bound to that obtained from
\Cref{thFour} (with $\delta=\sigma = 0$) is provided
in \Cref{fig2}.

\begin{figure}
    \centering
    \includegraphics[width = 0.6\textwidth]{K_our_K_DM_new.pdf}
    \caption{Plots showing the logarithm of the number of iterations as function of dimension $p$ for
    several values of $\eps$. The plotted values
    are derived from \Cref{thFour} and \eqref{DM2} (referred to as DM bound) using the data $m=10$, $M=50$, $M_2=1$,
    $W_2^2(\nu_0,\pi)=p + (p/m)$, $\delta= \sigma=0$.}
    \label{fig2}
\end{figure}

Under the assumption of Lipschitz continuity of the Hessian of $f$, one may wonder whether
second-order methods that make use of the Hessian in addition to the gradient are able
to outperform the standard LMC algorithm. The most relevant candidate algorithms for
this are the LMC with Ozaki discretization (LMCO) and a variant of it, LMCO', a slightly
modified version of an algorithm introduced in \citep{Dalalyan14}. The LMCO is a recursive
algorithm the update rule of which is defined as
follows: For every $k\ge 0$, we set $\bfH_{k} = \nabla^2 f(\bvartheta_{k,h}^{\rm LMCO})$,
which is an invertible $p\times p$ matrix since $f$ is strongly convex, and define
\begin{align}
&\bfM_k = \big(\bfI_p-e^{-h\bfH_k}\big)\bfH_k^{-1},\qquad
\bfSigma_k  = \big(\bfI_p-e^{-2h\bfH_k}\big)\bfH_k^{-1}, \\
&\bvartheta_{k+1,h}^{\rm LMCO} = \bvartheta_{k,h}^{\rm LMCO}
	-\bfM_k\nabla f\big(\bvartheta_{k,h}^{\rm LMCO}\big) +\bfSigma_k^{1/2}\bxi_{k+1},\label{update}
\end{align}
where $\{\bxi_{k}:k\in\NN\}$ is a sequence of independent random vectors distributed according
to the $\mathcal N_p(0,\bfI_p)$ distribution. The LMCO' algorithm is based on approximating the
matrix exponentials by linear functions, more precisely, for $\bfH_k' = \nabla^2 f(\bvartheta_{k,h}^{\rm LMCO'})$,
\begin{align}
\bvartheta_{k+1,h}^{\rm LMCO'} =\, &\bvartheta_{k,h}^{\rm LMCO'}-h\Big(\bfI_p-\frac12h\bfH_k'\Big)
\nabla f\big(\bvartheta_{k,h}^{\rm LMCO'}\big)\\
 &+ \sqrt{2h}\Big(\bfI_p-h\bfH_k'+\frac13h^2(\bfH_k')^2\Big)^{1/2}\bxi_{k+1}.
\label{update1}
\end{align}
Let us mention right away that the stochastic perturbation present in the last display can be
computed in practice without taking the matrix square-root. Indeed, it suffices to generate
two independent standard Gaussian vectors $\bfeta_{k+1}$ and $\bfeta_{k+1}'$; then  the random vector
\begin{align}
\big(\bfI_p-(\nicefrac12)h\bfH_k'\big)\bfeta_{k+1}+(\nicefrac{\sqrt{3}}{6})\,h \,\bfH_k'\bfeta'_{k+1}
\end{align}
has exactly the same distribution as
$\big(\bfI_p-h\bfH_k'+(\nicefrac13)h^2(\bfH_k')^2\big)^{1/2}\bxi_{k+1}$.

In the rest of this section,
we provide guarantees for methods LMCO and LMCO'. Note that we consider
only the case where the gradient and the Hessian of $f$ are computed exactly, that is without any approximation.

\begin{theorem}\label{thFive}
Let $\nu_K^{\rm LMCO}$ and $\nu_K^{\rm LMCO'}$ be, respectively, the distributions of the $K$-th iterate of the LMCO
algorithm \eqref{update} and  the  LMCO' algorithm \eqref{update1} with an initial distribution $\nu_0$.
Assume that conditions {\bf F}  and {\bf N} are  satisfied. Then, for every  $h\le \nicefrac{m}{M^2}$,
\begin{align}\label{A3'}
W_2(\nu_K^{\rm LMCO}, \pi) \le (1-0.25mh)^KW_2(\nu_{0},\pi)+ \frac{11.5 M_2h(p+1)}{m}.		
\end{align}
If, in addition, $h\le \nicefrac{3m}{4M^2}$, then
\begin{align}\label{A3''}
W_2(\nu_K^{\rm LMCO'}, \pi) &\le (1-0.25mh)^KW_2(\nu_{0},\pi)+
\frac{1.3M^2h^2\sqrt{Mp}}{m}+ \frac{7.3 M_2h(p+1)}{m}.
\end{align}
\end{theorem}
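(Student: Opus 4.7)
The plan is to extend the synchronous-coupling methodology used for \Cref{thTwo} and \Cref{thFour} to the second-order schemes, replacing the Euler step by a piecewise Ornstein--Uhlenbeck approximation of the Langevin diffusion. The LMCO iteration is, by construction, the exact time-$h$ marginal of such a linear SDE whose drift is the first-order Taylor expansion of $\nabla f$ at the current point. One can therefore compare LMCO directly to the true Langevin process, and then obtain LMCO' as a matrix-exponential perturbation of LMCO.

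For \eqref{A3'}, I would fix a step $k$ and, on $[0,h]$, introduce the linear auxiliary SDE
\begin{equation}
d\bU_t = -\bigl[\nabla f(\bU_0) + \bfH_k(\bU_t-\bU_0)\bigr]\,dt + \sqrt{2}\,d\bW_t,\qquad \bU_0 = \bvartheta_{k,h}^{\rm LMCO},
\end{equation}
whose explicit solution at time $h$ equals one LMCO update. Couple $\bU$ synchronously with a Langevin diffusion $\bL$ started from the optimal $W_2$ coupling of $\nu_k^{\rm LMCO}$ and $\pi$, and write $\bDelta_t = \bL_t - \bU_t$. The drift of $\bDelta$ splits as
\begin{equation}
-\bigl[\nabla f(\bL_t)-\nabla f(\bU_t)\bigr]\,dt \;-\; \bigl[\nabla f(\bU_t)-\nabla f(\bU_0)-\bfH_k(\bU_t-\bU_0)\bigr]\,dt,
\end{equation}
where the first bracket contracts at rate $m$ (by strong convexity applied along the mean-value line) and the second, a Taylor remainder $R_t$, is bounded by $(M_2/2)\|\bU_t-\bU_0\|_2^2$ via condition~F. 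It\^o's formula applied to $\|\bDelta_t\|_2^2$ combined with Young's inequality (tuned to absorb $R_t$ while paying only a fraction of the contraction budget, which is the origin of the factor $0.25$ under the constraint $h\le m/M^2$) reduces the argument to controlling $\mathbb{E}\|\bU_t-\bU_0\|_2^4$. From the explicit OU solution this moment is $\lesssim (tp)^2 + t^4\,\mathbb{E}\|\nabla f(\bU_0)\|_2^4$, and an induction bounds $\mathbb{E}\|\nabla f(\bvartheta_{k,h}^{\rm LMCO})\|_2^2$ dimension-proportionally using the near-stationary estimate $\mathbb{E}_\pi\|\nabla f\|_2^2\le Mp$. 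Together these ingredients yield the per-step recursion
\begin{equation}
W_2(\nu_{k+1}^{\rm LMCO},\pi) \le (1-0.25\,mh)\,W_2(\nu_k^{\rm LMCO},\pi) + c\,M_2\, h^2\,(p+1),
\end{equation}
whose geometric summation gives \eqref{A3'}.

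For \eqref{A3''} I would compare LMCO' to LMCO, coupling them via the same Gaussian $\bxi_{k+1}$. Taylor-expanding the matrix exponentials gives
\begin{align}
\bfM_k - h\bigl(\bfI_p - \tfrac{h}{2}\bfH_k\bigr) &= O\bigl(h^3\|\bfH_k\|^2\bigr),\\
\bfSigma_k^{1/2} - \sqrt{2h}\bigl(\bfI_p - h\bfH_k + \tfrac{h^2}{3}\bfH_k^2\bigr)^{1/2} &= O\bigl(h^{5/2}\|\bfH_k\|^2\bigr),
\end{align}
where the second identity requires matching the order-$h^2$ coefficients of the two matrix square roots (respectively $5/24$ and $1/24$, differing by exactly $1/6$). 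Applied to $\nabla f(\bU_0)$ (whose $L^2$ norm is $O(\sqrt{Mp})$ near stationarity) and to $\bxi_{k+1}$, these bounds produce a per-step $W_2$-perturbation of order $h^3 M^{5/2}\sqrt{p}$; summed against the contraction $(1-0.25mh)$ this generates the additional $O(h^2 M^{5/2}\sqrt{p}/m)$ term and thus \eqref{A3''}.

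The main obstacle will be the simultaneous control of the Taylor remainder and the contraction: $R_t$ is controlled via $\mathbb{E}\|\bU_t-\bU_0\|_2^4$, whose drift part involves $\|\nabla f(\bvartheta_{k,h}^{\rm LMCO})\|_2$, a quantity not a priori bounded along the algorithm. One must propagate a dimension-proportional bound on the gradient norm inductively, exploiting strong convexity together with the current $W_2$ estimate (via $\|\nabla f(\bvartheta)\|_2^2 \lesssim M^2\|\bvartheta-\btheta^*\|_2^2$ taken in expectation). The LMCO' covariance computation also demands carefully distinguishing the two order-$h^2$ coefficients mentioned above to confirm that the residual $O(h^{5/2})$ error is indeed the leading behaviour, and that no cheaper coupling of the two Gaussians is available that would improve the rate.
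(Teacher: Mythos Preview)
Your coupling setup for LMCO is correct, but your decomposition differs from the paper's in a way that is exactly the source of the ``main obstacle'' you identify. You place the Taylor remainder $R_t=\nabla f(\bU_t)-\nabla f(\bU_0)-\bfH_k(\bU_t-\bU_0)$ on the \emph{algorithm} side, which forces you to control $\mathbb E\|\bU_t-\bU_0\|_2^4$ and hence $\mathbb E\|\nabla f(\bvartheta_{k,h}^{\rm LMCO})\|_2^4$ (not merely the second moment you mention) inductively in $k$. The paper instead applies a variation-of-constants (Gronwall) lemma to write $\bDelta_h-\bDelta_0$ as an integral against $e^{-s\bfH_k}$ and then \emph{re-centers the remainder on the stationary side}: the four terms that appear involve $\nabla f(\bL_s)-\nabla f(\bL_0)-\nabla^2 f(\bL_0)(\bL_s-\bL_0)$ and $(\nabla^2 f(\bD_0)-\nabla^2 f(\bL_0))$ acting on functionals of $\bL$. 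Because $\bL$ is stationary, every moment needed (e.g.\ $\mathbb E\|\bL_t-\bL_0\|_2^4$, $\mathbb E\|\nabla f(\bL_0)\|_2^4\le M^2p(p+2)$) is available directly from $\pi$, and no bootstrap along the algorithm is required. The factor $0.25$ then arises not from a Young inequality on $R_t$ but from the combination of the contraction $(1-mh+\tfrac12 M^2h^2)$ produced by the $A_t$ term and a slack $\mu=0.25mh$ spent on a cross term, both of which fit under $h\le m/M^2$.

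For LMCO$'$, the paper does not triangulate through LMCO. It defines a second auxiliary process whose time-$h$ marginal is exactly one LMCO$'$ step, observes that it solves the same linear SDE as in the LMCO analysis plus two explicit extra drift terms, and re-runs the decomposition with two additional pieces $E_h,F_h$ of size $O(M^{5/2}h^3\sqrt p)$ and $O(M^2h^{5/2}\sqrt p)$ respectively; these are precisely the matrix-exponential residuals you computed, but injected \emph{inside} the one-step recursion rather than added afterwards. Your perturbative route is conceptually fine for the order of the extra $M^{5/2}h^2\sqrt p/m$ term, but to turn it into a bound on $W_2(\nu_K^{\rm LMCO'},\pi)$ you cannot simply add $W_2(\nu_K^{\rm LMCO'},\nu_K^{\rm LMCO})$ to the LMCO bound: you need the one-step LMCO contraction applied to the LMCO$'$ iterates, which amounts to the paper's direct analysis anyway (and explains why the $M_2$ constants for the two algorithms come out essentially the same rather than one being dominated by the other).
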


A very rough consequence of this theorem is that one has similar theoretical guarantees for
the LMCO and the LMCO' algorithms, since in most situations the middle term in the right hand side
of \eqref{A3''} is smaller than the last term. On the other hand, the per-iteration cost of the
modified algorithm LMCO' is significantly smaller than the per-iteration cost of the original LMCO.
Indeed, for the LMCO' there is no need to compute matrix exponentials neither to invert matrices,
one only needs to perform  matrix-vector multiplication for $p\times p$ matrices. Note that for many
matrices such a multiplication operation might be very cheap using the fast Fourier transform or
other similar techniques. In addition, the computational complexity of the Hessian-vector product
is provably of the same order as that of evaluating the gradient, see \citep{Griewank}. Therefore,
one iteration of the LMCO' algorithm is not more costly than one iteration of the LMC. At the same
time, the error bound \eqref{A3''} for the LMCO' is smaller than the one for the LMC provided by
\Cref{thFour}. Indeed, the term $Mh\sqrt{Mp}$ present in the bound of \Cref{thFour} is generally
of larger order than the term $(Mh)^2\sqrt{Mp}$ appearing in \eqref{A3''}.

\section{Relation with optimization}
\label{secOpt}

We have already mentioned that the LMC algorithm is very close to the gradient descent
algorithm for computing the minimum $\btheta^*$ of the function $f$.  However, when we
compare the guarantees of~\Cref{thOne} with those available for the optimization problem,
we remark the following striking difference. The approximate computation of $\btheta^*$ requires
a number of steps of the order of $\log(1/\varepsilon)$ to reach the precision $\varepsilon$,
whereas, for reaching the same precision in sampling from $\pi$, the LMC algorithm needs a
number of iterations proportional to $(p/\varepsilon^2)\log (p/\varepsilon)$.
The goal of this section is to explain that this, at first sight disappointing
behavior of the LMC algorithm is, in fact, consistent with the exponential
convergence of the gradient descent. Furthermore, the latter is obtained from the guarantees on the LMC by letting a temperature parameter go to zero.

The main ingredient for the explanation is that the function $f(\btheta)$ and the function
$f_\tau(\btheta) = f(\btheta)/\tau$ have the same point of minimum $\btheta^*$, whatever
the real number $\tau>0$. In addition, if we define the density function
$\pi_\tau(\btheta)\propto \exp\big(-f_\tau(\btheta)\big)$, then the average value
$$
\bar\btheta_\tau = \int_{\RR^p } \btheta\, \pi_\tau(\btheta)\,d\btheta
$$
tends to the minimum point $\btheta^*$ when $\tau$ goes to zero. Furthermore,
the distribution $\pi_\tau(d\btheta)$ tends to the Dirac measure at $\btheta^*$.
Clearly, $f_\tau$ satisfies \eqref{1} with the constants $m_\tau = m/\tau$
and $M_\tau = M/\tau$. Therefore, on the one hand, we can apply to $\pi_\tau$
claim (a) of \Cref{thOne}, which tells  us that if we choose $h = 1/M_\tau = \tau/M$,
then
\begin{equation}
\label{7}
W_2(\nu_K,\pi_\tau) \le \Big(1-\frac{m}{M}\Big)^K W_2(\delta_{\btheta_{0}},\pi_\tau)
+ 1.65\Big(\frac{M}{m}\Big)\Big(\frac{p\tau}{M}\Big)^{1/2}.
\end{equation}
On the other hand, the LMC algorithm with the step-size $h=\tau/M$ applied to
$f_\tau$ reads as
\begin{equation}
\label{8}
\bvartheta_{k+1,h} = \bvartheta_{k,h} - \frac1M \nabla f(\bvartheta_{k,h})+
\sqrt{\frac{2\tau}M}\;\bxi_{k+1};\qquad k=0,1,2,\ldots
\end{equation}
When the parameter $\tau$ goes to zero, the LMC sequence \eqref{8} tends
to the gradient descent sequence $\btheta_{k}$. Therefore, the limiting case
of \eqref{7} corresponding to $\tau\to 0$ writes as
\begin{equation}
\label{optimGuar}
\|\btheta^{(K)}-\btheta^*\|_2 \le \Big(1-\frac{m}{M}\Big)^K \|\btheta_{0}-\btheta^*\|_2,
\end{equation}
which is a well-known result in Optimization. This clearly shows that \Cref{thOne} is a natural
extension of the results of convergence from optimization to sampling.

Such an analogy holds true for the Newton method as well. Its counterpart in sampling is the
LMCO algorithm. Indeed, one easily checks that if $f$ is replaced by $f_\tau$ with $\tau$ going to
zero, then, for any fixed step-size $h$, the matrix $\bfSigma_k$ in \eqref{update} tends to zero.
This implies that the stochastic perturbation vanishes. On the other hand, the term
$\bfM_{k,\tau}\nabla f_\tau(\bvartheta_{k,h}^{\rm LMCO})$ tends to $\{\nabla^2 f
(\bvartheta_{k,h}^{\rm LMCO})\}^{-1}\nabla f(\bvartheta_{k,h}^{\rm LMCO})$, as $\tau\to 0$.
Thus, the updates of the Newton algorithm can be seen as the limit case, when $\tau$ goes to
zero, of the updates of the LMCO.

However, if we replace $f$ by $f_\tau$ in the upper bounds stated in \Cref{thFive} and we
let $\tau$ go to zero, we do not retrieve the well-known guarantees for the Newton method.
The main reason is that \Cref{thFive} describes the behavior of the LMCO algorithm in the
regime of small step-sizes $h$, whereas Newton's method corresponds to (a limit case of) the
LMCO with a fixed $h$. Using arguments similar to those employed in the proof of \Cref{thFive},
one can establish the following result, the proof of which is postponed to \Cref{secProof}.

\begin{proposition} \label{propB}
Let $\nu_K^{\rm LMCO}$ be the distributions of the $K$-th iterate of the LMCO
algorithm \eqref{update} with an initial distribution $\nu_0$.  Assume that condition
{\bf F} is  satisfied. Then, for every  $h>0$ and $K\in\NN$,
\begin{align}\label{A5}
W_2(\nu_K^{\rm LMCO}, \pi)	&\le \frac{2m}{M_2} \big(w_K\exp(v_Kw_K^{-2^K})\big)^{2^K}
\end{align}
with
\begin{align}
w_K &= \frac{M_2W_{2^{K+1}}(\nu_0,\pi)}{2m}+\frac12e^{-mh},\ \text{and}\
v_K = \frac{2M_2 M^{3/2}\sqrt{2p+2^{K}}}{m^3} +e^{-mh}.
\end{align}		
\end{proposition}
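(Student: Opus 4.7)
The strategy I would adopt is a Newton-style analysis built on two facts: the Ozaki update~\eqref{update} is \emph{exact} for a quadratic potential, so the one-step discretization error must come solely from the second-order Taylor remainder of $\nabla f$, which condition~\textbf{F} bounds quadratically in the displacement; and this quadratic remainder, iterated $K$ times, produces a self-squaring recursion whose solution has the $(\,\cdot\,)^{2^K}$ form appearing in~\eqref{A5}. Concretely, for each $k$ I would introduce the auxiliary Langevin diffusion $\{\bL_t^{(k)}\}_{t\in[0,h]}$ started at $\bvartheta_{k,h}^{\rm LMCO}$ and driven by a Brownian motion whose time-$h$ increment matches the Gaussian driver $\bfSigma_k^{1/2}\bxi_{k+1}$ of the LMCO. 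Then $\bvartheta_{k+1,h}^{\rm LMCO}$ coincides with the time-$h$ value of the \emph{linearized} SDE with drift $-\nabla f(\bvartheta_{k,h}^{\rm LMCO})-\bfH_k(\,\cdot-\bvartheta_{k,h}^{\rm LMCO})$ driven by the same Brownian motion, and the discrepancy with $\bL_h^{(k)}$ is produced only by the Taylor remainder, whose norm is at most $(M_2/2)\|\bL_s^{(k)}-\bvartheta_{k,h}^{\rm LMCO}\|_2^2$.

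A Gr\"onwall estimate using the strong convexity in~\textbf{F} then produces a one-step bound of the form $\|\bL_h^{(k)}-\bvartheta_{k+1,h}^{\rm LMCO}\|_2 \le (M_2/2)\int_0^h\|\bL_s^{(k)}-\bvartheta_{k,h}^{\rm LMCO}\|_2^2\,ds$, while for small $s$ the process $\bL_s^{(k)}-\bvartheta_{k,h}^{\rm LMCO}$ is controlled by its Brownian contribution $\sqrt{2}(\bW_s-\bW_0)$, whose $L^q$-norm scales as $\sqrt{s(p+q)}$. Taking $L^q$-norms and combining with the $W_q$-contraction of the Langevin semigroup (valid for every $q\ge 2$ under strong convexity via synchronous coupling), I would obtain a recursive inequality of the form
\[
W_q(\nu_{k+1},\pi) \;\le\; e^{-mh}\,W_q(\nu_k,\pi) \;+\; \tfrac{M_2}{2}\,W_{2q}(\nu_k,\pi)^2 \;+\; \tfrac{M_2 M^{3/2}h}{m}\sqrt{p+q}\,.
\]
The appearance of $W_{2q}$ on the right is the engine of the argument: because each step needs the squared Wasserstein distance of order $2q$ to control the one of order $q$, reaching $W_2$ at the final iterate $K$ forces us to initialize the recursion at order $q_0=2^{K+1}$, which is precisely why $W_{2^{K+1}}(\nu_0,\pi)$ appears in the definition of $w_K$ and why the dimension factor in $v_K$ takes the form $\sqrt{2p+2^{K}}$.

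Iterating with $q_k:=2^{K-k+1}$ and passing to the normalized variable $u_k:=(M_2/2m)\,W_{q_k}(\nu_k,\pi)$ turns the above into a Newton-type inequality $u_{k+1}\lesssim e^{-mh}u_k+u_k^2+\mathrm{noise}_k$, whose solution after $K$ unrollings has exactly the shape $(w_K\exp(v_K w_K^{-2^K}))^{2^K}$. I expect the main obstacle to be this last unrolling: pushing the accumulated noise and the $e^{-mh}$ contributions from each level \emph{inside} the exponential factor $\exp(v_K w_K^{-2^K})$, so that the $2^K$-fold iteration of the quadratic recursion does not inflate the noise beyond what is permitted by the stated $v_K$. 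This is the sampling analogue of the standard device for proving quadratic convergence of Newton's method in the presence of an additive perturbation, and it is what justifies the unusual closed form of the bound~\eqref{A5}.
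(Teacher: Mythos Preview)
Your high-level plan (a Newton-type quadratic recursion that doubles the moment order at each step and therefore needs $W_{2^{K+1}}(\nu_0,\pi)$ at the start) is exactly right, and your explanation of why the bound has the form $(w_K\exp(v_Kw_K^{-2^K}))^{2^K}$ is correct. The gap is in the coupling you choose, and it breaks the recursion you write.

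By starting the auxiliary Langevin $\bL^{(k)}$ at $\bvartheta_{k,h}^{\rm LMCO}$, your one-step error
\[
\big\|\bL_h^{(k)}-\bvartheta_{k+1,h}^{\rm LMCO}\big\|_2 \le \tfrac{M_2}{2}\!\int_0^h\!\|\bL_s^{(k)}-\bvartheta_{k,h}^{\rm LMCO}\|_2^2\,ds
\]
involves the \emph{self-displacement} $\bL_s^{(k)}-\bL_0^{(k)}$ of a non-stationary diffusion, not the distance to $\pi$. Combined with the semigroup contraction this yields a recursion of the form $W_q(\nu_{k+1},\pi)\le e^{-mh}W_q(\nu_k,\pi)+\text{(noise)}$, which is \emph{linear} in $W_q$; no term $\tfrac{M_2}{2}W_{2q}(\nu_k,\pi)^2$ ever appears. (If you try to extract a quadratic contribution from the drift part of $\bL_s^{(k)}-\bL_0^{(k)}$, its coefficient is of order $M_2M^2h^3$, not $M_2/(2m)$; under the Newton rescaling $f\mapsto f/\tau$ used in Section~\ref{secOpt} it blows up and does not reproduce the quadratic-convergence limit.)

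The paper obtains the quadratic term by the opposite coupling: it takes $\bL_0\sim\pi$ optimally coupled with $\bD_0=\bvartheta_{k,h}^{\rm LMCO}$, runs the OU process $\bD_t$ (with Hessian $\bfH_k$ at $\bD_0$) and the true Langevin $\bL_t$ from these \emph{different} starting points with the same Brownian motion, and works with $\bDelta_k=\bL_0-\bD_0$ pathwise. Writing $\nabla f(\bL_0)-\nabla f(\bD_0)=\big(\int_0^1\nabla^2 f(\bD_0+x\bDelta_k)\,dx\big)\bDelta_k$ and comparing that averaged Hessian with $\bfH_k$ gives
\[
\|\bDelta_{k+1}\|_2\ \le\ \tfrac{M_2}{2m}\,\|\bDelta_k\|_2^2 + e^{-mh}\|\bDelta_k\|_2 + \|G_{k,h}\|_2,
\]
where $G_{k,h}$ depends only on the \emph{stationary} Langevin and hence has $q$-th moments bounded by a constant times $M^{3/2}m^{-2}\sqrt{2p+q}$. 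Taking $L^{2^{K-k}}$-norms via Minkowski produces exactly the recursion $x_{k+1}\le \tfrac{M_2}{2m}x_k^2+e^{-mh}x_k+C_k$, to which the elementary Lemma (the self-squaring lemma, here Lemma~\ref{lemH}) applies. In short: the Newton phenomenon lives in the Taylor error between $\nabla f(\bL_0)$ and $\nabla f(\bD_0)$ with $\bL_0\sim\pi$, not in the one-step discretization error of a diffusion started at the iterate. Once you switch to that coupling, the rest of your outline (moment doubling, then unrolling into the stated closed form) goes through.
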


If we replace in the right hand side of \eqref{A5} the quantities $m$, $M$ and $M_2$, respectively,
by $m_\tau = m/\tau$, $M_\tau = M/\tau$ and $M_{2,\tau} = M_2/\tau$, and we let $\tau$ go to zero,
then it is clear that the term $v_K$ vanishes. On the other hand, if $\nu_0$ is the Dirac mass at some
point $\btheta_0$, then $w_K$ converges to $M_2\|\btheta_0-\btheta^*\|_2/(2m)$. Therefore, for Newton's
algorithm as a limiting case of \eqref{A5} we get
\begin{align}
\|\btheta_K^{\rm Newton}-\btheta^*\|_2 \le \frac{2m}{M_2} \bigg(\frac{M_2\|\btheta_0-\btheta^*\|_2}{2m}\bigg)^{2^K}.
\end{align}
The latter provides the so called quadratic rate of convergence, which is a well-known result that can be found in
many textbooks; see, for instance, \cite[Theorem 9.1]{ChongZak}.

A particularly promising remark made in \Cref{subseq:mixt} is that all the results
established for the problem of approximate sampling from a log-concave distribution
can be carried over the distributions that can be written as a mixture of (strongly)
log-concave distributions. The only required condition is to be able to sample from
the mixing distribution. This provides a well identified class of (posterior)
distributions for which the problem of finding the mode is difficult (because of
nonconvexity) whereas the sampling problem can be solved efficiently.

There are certainly other interesting connections to uncover between sampling and
optimization. In particular,  in \cite{ma2018sampling}, it was shown that in the case of mixture distributions, sampling algorithms scale linearly with the model dimension, as opposed to those of optimization, which have exponential scaling. One can think of lower bounds for sampling or finding a sampling counterpart
of Nesterov acceleration. Some recent advances on the gradient flow \Citep{Wibisono}
might be useful for achieving these goals.

\section{Conclusion}

We have presented easy-to-use finite-sample guarantees for sampling from a strongly log-concave
density using the Langevin Monte-Carlo algorithm with a fixed step-size and extended it to the
case where the gradient of the log-density can be evaluated up to some error term. Our results cover
both deterministic and random error terms. We have also demonstrated that if the log-density $f$ has
a Lipschitz continuous second-order derivative, then one can choose a larger step-size and obtain
improved convergence rate.

We have also uncovered some analogies between sampling and optimization. The underlying principle
is that an optimization algorithm may be seen as a limit case of a sampling algorithm. Therefore,
the results characterizing the convergence of the optimization schemes should have their counterparts
for sampling strategies. We have described these analogues for the steepest gradient descent and for
the Newton algorithm. However, while in the optimization the relevant characteristics of the problem
are the dimension $p$, the desired accuracy $\eps$ and the condition number $M/m$, the problem sampling
involves an additional characteristic which is the scale given by the strong-convexity constant $m$. Indeed,
if we increase $m$ by keeping the condition number $M/m$ constant, the number of iterations for the LMC to
reach the precision $\epsilon$ will decrease. In this respect, we have shown that the LMC with
Ozaki discretization, termed LMCO, has a better dependence on the overall scale of $f$ than the original
LMC algorithm. However, the weakness of the LMCO is the high computational cost of each iteration.
Therefore, we have proposed a new algorithm, LMCO', that improves the LMC in terms of its dependence on the scale
and each iteration of LMCO' is computationally much cheaper than each iteration of the LMCO.

Another interesting finding is that, in the case of accurate gradient evaluations (\textit{i.e.}, when
there is no error in the gradient computation), a suitably chosen variable step-size leads to logarithmic
improvement in the convergence rate of the LMC algorithm.

Interesting directions for future research are establishing lower bounds in the spirit of those existing
in optimization, obtaining user-friendly guarantees for computing the posterior mean or for sampling from
a non-smooth density. Some of these problems have already been tackled in several papers mentioned in
previous sections, but we believe that the techniques developed in the present work might be helpful for
revisiting and deepening the existing results.

\section{Proofs}
\label{secProof}

The basis of the proofs of all the theorems stated
in previous sections is a recursive inequality that upper bounds the error at the step $k+1$,
$W_2(\nu_{k+1},\pi)$, by an expression involving the error of the previous
step, $W_2(\nu_k,\pi)$.  To this end, we use the fact that for a suitably chosen Langevin diffusion, $\bL$, in stationary regime, we have
$W_2(\nu_k,\pi)^2 = \bfE[\|\vartheta_{k} -\bL_{kh}\|_2^2]$ and $W_2(\nu_{k+1},\pi)^2 \le \bfE[\|\vartheta_{k+1} -\bL_{(k+1)h}\|_2^2]$. The goal is then to upper bound the latter by an expression that involves the former and some
suitably controlled remainder terms. This leads to a recursive inequality and the last step of the proof is to unfold the recursion. Since different chains $\vartheta_{k,h}$ are considered in this paper, we get different recursive inequalities. \Cref{lemE} and \Cref{lemH} are the new technical tools that are used for solving the encountered recursive inequalities.  The remainder terms appearing
in the recursive inequalities are evaluated by using stochastic calculus and the smoothness
properties of $f$. The main building blocks
for these evaluations are \Cref{lemB}, \Cref{lemC} and \Cref{lemF}, the latter being
used only in the results assuming the Hessian-Lipschitz condition.

We will also make repeated use of the Minkowski inequality
and its integral version
\begin{align}\label{eqA}
\bigg\{\bfE\bigg[\bigg(\int_a^b X_t\,dt\bigg)^p\bigg]\bigg\}^{1/p} \le
\int_a^b\big\{\bfE\big[|X_t|^p\big]\big\}^{1/p} \,dt, \qquad \forall p\in \NN^*,
\end{align}
where $X$ is a random process almost all paths of which are integrable over
the interval $[a,b]$. Furthermore, for any random vector $\bX$, we define the norm
$\|\bX\|_{L_2} = (\bfE[\|\bX\|_2^2])^{1/2}$.

The next result is the central ingredient of the proofs of \Cref{thOne,thOneBis,thTwo}. Readers
interested only in the proof of \Cref{thOne,thOneBis}, are invited---in the next proof---to consider
the random vectors $\bzeta_k$ as equal to $\mathbf 0$ and $\bY_{k,\bh}$ as equal to
$\nabla f(\bvartheta_{k,\bh})$. This implies, in particular, that $\sigma = \delta =0$.

\begin{proposition}\label{propA}
Let us introduce $\varrho_{k+1}= \max(1-mh_{k+1}, Mh_{k+1}-1)$ (since $h\in(0,\nicefrac2M)$,
this value $\varrho$ satisfies $0< \varrho <1$). If $f$ satisfies \eqref{1} and $h_{k+1}\le 2/M$, then
\begin{align}
W_2(\nu_{k+1},\pi)^2
		&\le \big\{\varrho_{k+1}W_2(\nu_{k},\pi) +\alpha M(h_{k+1}^3p)^{1/2} + h_{k+1}\delta\sqrt{p}\big\}^2
		+ \sigma^2 h_{k+1}^2 p,\label{DD}
\end{align}
with $\alpha = 7\sqrt{2}/6\le 1.65$.
\end{proposition}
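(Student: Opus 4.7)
The plan is to couple one step of the nLMC with one step of the underlying Langevin diffusion~\eqref{3} and unroll the squared $L_2$ norm of the difference. Concretely, I will place on a common probability space a pair $(\bvartheta_{k,\bh},\bL_0)$ that achieves the optimal $W_2$-coupling of $\nu_k$ and $\pi$, and run~\eqref{3} up to time $h_{k+1}$ driven by a Brownian motion $\bW$ whose terminal increment satisfies $\sqrt{2h_{k+1}}\,\bxi_{k+1}=\sqrt 2(\bW_{h_{k+1}}-\bW_0)$. Since $\pi$ is invariant for~\eqref{3}, $\bL_{h_{k+1}}\sim\pi$, and therefore $W_2(\nu_{k+1},\pi)\le\|\bvartheta_{k+1,\bh}-\bL_{h_{k+1}}\|_{L_2}$.

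Subtracting the defining equations and splitting $\nabla f(\bvartheta_{k,\bh})-\nabla f(\bL_t)=[\nabla f(\bvartheta_{k,\bh})-\nabla f(\bL_0)]+[\nabla f(\bL_0)-\nabla f(\bL_t)]$ together with $\bzeta_k=\bfE[\bzeta_k|\bvartheta_{k,\bh}]+\tilde\bzeta_k$, I will decompose the difference as $D:=\bvartheta_{k+1,\bh}-\bL_{h_{k+1}}=U-h_{k+1}\tilde\bzeta_k$, where
\[
U=A-B-h_{k+1}\bfE[\bzeta_k|\bvartheta_{k,\bh}],
\]
with $A=(\bvartheta_{k,\bh}-\bL_0)-h_{k+1}[\nabla f(\bvartheta_{k,\bh})-\nabla f(\bL_0)]$ the gradient-descent contraction term and $B=\int_0^{h_{k+1}}[\nabla f(\bL_0)-\nabla f(\bL_t)]\,dt$ the discretization-error term. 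The crux is to show $\bfE[U^\top\tilde\bzeta_k]=0$, so that $\|D\|_{L_2}^2=\|U\|_{L_2}^2+h_{k+1}^2\|\tilde\bzeta_k\|_{L_2}^2\le\|U\|_{L_2}^2+h_{k+1}^2\sigma^2 p$. I expect this orthogonality to be the main subtlety: $U$ is measurable with respect to $\bvartheta_{k,\bh}$ and the Brownian path on $[0,h_{k+1}]$, while condition~\textbf{N}, applied on an appropriately enlarged filtration that includes the independent Brownian bridge, makes $\tilde\bzeta_k$ independent of $U$ conditionally on $\bvartheta_{k,\bh}$ and of conditional mean zero; conditioning first on $\bvartheta_{k,\bh}$ and then on the Brownian path kills the cross term.

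The remaining bounds I plan to obtain by Minkowski's inequality and condition~\textbf{N}. For $A$, the standard fact that $x\mapsto x-h_{k+1}\nabla f(x)$ is $\varrho_{k+1}$-Lipschitz under~\eqref{1} when $h_{k+1}\le 2/M$ yields $\|A\|_{L_2}\le\varrho_{k+1}W_2(\nu_k,\pi)$. For $B$, I will use the Lipschitz property of $\nabla f$ together with the integrated Minkowski inequality~\eqref{eqA} and the SDE~\eqref{3} to get $\|\bL_t-\bL_0\|_{L_2}\le\int_0^t\|\nabla f(\bL_s)\|_{L_2}\,ds+\sqrt{2pt}$. Since $\bL_s\sim\pi$, the integration-by-parts identity $\int\|\nabla f\|_2^2\,\pi=\int\Delta f\,\pi\le Mp$ gives $\|\nabla f(\bL_s)\|_{L_2}\le\sqrt{Mp}$, and integrating twice while using $Mh_{k+1}\le 2$ produces
\[
\|B\|_{L_2}\le M\Bigl(\tfrac12 h_{k+1}^2\sqrt{Mp}+\tfrac{2\sqrt 2}{3}\sqrt p\,h_{k+1}^{3/2}\Bigr)\le \tfrac{7\sqrt 2}{6}\,M\sqrt p\,h_{k+1}^{3/2},
\]
which yields the constant $\alpha=7\sqrt 2/6$. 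Finally, condition~\textbf{N} gives $\|h_{k+1}\bfE[\bzeta_k|\bvartheta_{k,\bh}]\|_{L_2}\le h_{k+1}\delta\sqrt p$, and assembling by the triangle inequality gives $\|U\|_{L_2}\le\varrho_{k+1}W_2(\nu_k,\pi)+\alpha M(h_{k+1}^3 p)^{1/2}+h_{k+1}\delta\sqrt p$; plugging this into the orthogonal decomposition for $\|D\|_{L_2}^2$ yields exactly~\eqref{DD}.
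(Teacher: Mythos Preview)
Your argument is essentially the paper's own proof: the same synchronous coupling, the same decomposition into a contraction term, a discretization term, a bias term, and a centered-noise term, and the same Lemmas~\ref{lemA}--\ref{lemC} (your bounds on $A$ and $B$ are exactly those lemmas, including the computation that produces $\alpha=7\sqrt{2}/6$ via $Mh_{k+1}\le 2$).

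The one point where you are less precise than the paper is the orthogonality step. You correctly identify it as the subtlety, and your Brownian-bridge remark handles the dependence of $U$ on the path $\bW$ (the endpoint is $\sqrt{h_{k+1}}\,\bxi_{k+1}$, independent of $\bzeta_k$ by Condition~\textbf{N}, and the bridge may be generated independently of everything). However, $U$ also depends on $\bL_0$, and nothing in Condition~\textbf{N} controls the relation between $\bL_0$ and $\bzeta_k$. The paper closes this gap by \emph{constructing} the optimal coupling so that, in addition to achieving $W_2(\nu_k,\pi)=\|\bL_0-\bvartheta_{k,\bh}\|_{L_2}$, one also has $\bfE[\bzeta_k\mid\bvartheta_{k,\bh},\bL_0]=\bfE[\bzeta_k\mid\bvartheta_{k,\bh}]$ (this is always possible: build $\bL_0$ from $\bvartheta_{k,\bh}$ and an external randomization independent of $\bzeta_k$). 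With that extra clause, your chain $\bfE[\tilde\bzeta_k\mid\bvartheta_{k,\bh},\bL_0,\bW]=\bfE[\tilde\bzeta_k\mid\bvartheta_{k,\bh},\bL_0]=\bfE[\tilde\bzeta_k\mid\bvartheta_{k,\bh}]=0$ goes through, and the rest of your proof is complete.
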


\begin{proof}
To simplify notation, and since there is no risk of confusion,
we will write $h$ instead of $h_{k+1}$.
The main steps of the
proof are the following. We use a synchronous coupling for approximating the distribution of the LMC sequence by that of a continuous-time Langevin diffusion. We then take advantage
of the strong convexity of $f$ for showing that, for $h$ small
enough, the error at round $k+1$ is upper bounded, up to a
additive remainder term, by the error at round k multiplied by
a factor strictly smaller than one, see \Cref{lemA}. The
smoothness of the gradient of $f$ ensures that the
aforementioned remainder term is small, see \Cref{lemB}
and \Cref{lemC} below.

Let $\bL_0$ be a random
vector drawn from $\pi$ such that
$W_2(\nu_k,\pi) = \|\bL_0-\bvartheta_{k,\bh}\|_{L_2}$ and $\bfE[\bzeta_{k}|\bvartheta_{k,\bh},\bL_0] =
\bfE[\bzeta_{k}|\bvartheta_{k,\bh}]$. Let $\bW$ be a
$p$-dimensional Brownian Motion independent of $(\bvartheta_{k,\bh}, \bL_0,\bzeta_{k})$,
such that $\bW_{h} = \sqrt{h}\,\bxi_{k+1}$. We define the stochastic process $\bL$
so that
\begin{align}\label{B}
\bL_t &= \bL_0 - \int_0^t  \nabla f(\bL_s)\,ds + \sqrt{2}\,\bW_t,\qquad\forall\, t>0.
\end{align}
It is clear that this equation implies that
\begin{align}
\ds\bL_{h}
	&= \bL_{0} - \int_{0}^{h} \nabla f(\bL_s)\,ds + \sqrt{2}\,\bW_{h}\\
	&= \bL_{0} - \int_{0}^{h} \nabla f(\bL_s)\,ds + \sqrt{2h}\,\bxi_{k+1}.
\end{align}
Furthermore, $\{\bL_t:t\ge 0\}$ is a diffusion process having $\pi$ as the stationary
distribution. Since the initial value $\bL_0$ is drawn from $\pi$, we have $\bL_t\sim \pi$
for every $t\ge 0$.

Let us denote $\bDelta_k = \bL_{0}-\bvartheta_{k,\bh}$ and
$\bDelta_{k+1} = \bL_{h}-\bvartheta_{k+1,\bh}$. We have
\begin{align}
\bDelta_{k+1}
	& = \bDelta_k  + h \bY_{k,\bh} - \int_0^h\nabla f(\bL_t)\,dt \\
	& = \bDelta_k  - h\big(\underbrace{\nabla f(\bvartheta_{k,\bh}+\bDelta_k)
			-\nabla f(\bvartheta_{k,\bh})}_{:=\bU}\big)+ h\bzeta_{k}\\
	&\qquad		-\underbrace{\int_0^h\big(\nabla f(\bL_t) - \nabla f(\bL_{0})\big)\,dt}_{:=\bV}.\label{D_}
\end{align}
Using the equalities
$\bfE[\bzeta_{k}|\bDelta_k,\bU,\bV] = \bfE[\bzeta_{k}|\bvartheta_{k,\bh},\bL_0,\bW] =
\bfE[\bzeta_{k}|\bvartheta_{k,\bh},\bL_0] = \bfE[\bzeta_{k}|\bvartheta_{k,\bh}]$, we get
\begin{align}
\|\bDelta_{k+1} \|_{L_2}^2
		&= \big\|\bDelta_k  -h \bU -\bV+h\bfE[\bzeta_{k}|\bvartheta_{k,\bh}]\big\|_{L_2}^2 +
		h^2\big\|\bzeta_{k}-\bfE[\bzeta_{k}|\bvartheta_{k,\bh}]\big\|_{L_2}^2\\
		&\le \big\|\bDelta_k  -h \bU -\bV+h\bfE[\bzeta_{k}|\bvartheta_{k,\bh}]\big\|_{L_2}^2 + \sigma^2 h^2 p\\
		&\le \big\{\|\bDelta_k  -h \bU\|_{L_2} +h\delta\sqrt{p} +\|\bV\|_{L_2}\big\}^2
			+ \sigma^2 h^2 p.\label{D}
\end{align}

We need now three technical lemmas.
\Cref{lemA} and \Cref{lemB} are borrowed from \citep{DalalyanColt}, whereas
\Cref{lemC} is an improved version of \citep[Lemma 3]{DalalyanColt}.  For the sake of self-containedness, we provide  proofs of these lemmas in \Cref{ssecLem}.

\begin{lemma}\label{lemA}
    Let $f$ be $m$-strongly convex and the gradient of $f$ be
    Lipschitz with constant $M$. If $h<2/M$, then the mapping
    $(\bfI_p-h\nabla f)$ is a contraction in the sense that
    \begin{equation}\label{ineq_lemma2}
        \big\|\bx-\by-h\big(\nabla f(\bx)-\nabla f(\by)\big)        \big\|_2 \le \big\{(1-mh)\vee (Mh-1)\big\}\|\bx-\by\|_2,
    \end{equation}
    for all $\bx,\by\in\RR^p$. In particular, using notations in \eqref{D_}, it holds that
    $\|\bDelta_k  -h \bU\|_2 \le  \varrho\|\bDelta_k\|_2$.
\end{lemma}

\begin{lemma}\label{lemB}
    If the function $f$ is continuously differentiable and
    the gradient of $f$ is Lipschitz with constant $M$, then
    $\int_{\RR^p} \|\nabla f(\bx)\|_2^2\,\pi(\bx)\,d\bx \le Mp$.
\end{lemma}

\begin{lemma}\label{lemC}
If the function $f$ and its gradient is Lipschitz with constant $M$,
$\bL$ is the Langevin diffusion \eqref{B} and $\bV(a) =
\int_a^{a+h}\big(\nabla f(\bL_t)-\nabla f(\bL_a)\big)\,dt$ for some $a\ge 0$, then
\begin{align}		
\|\bV(a)\|_{L_2}	&\le \frac12\big(h^4 M^{3}p\big)^{1/2} + \frac23(2h^3p)^{1/2}M .
\end{align}
\end{lemma}

Using \Cref{lemA} and \Cref{lemC} above, as well as the inequality $W_2(\nu_{k+1},\pi)^2\le \bfE[\|\bDelta_{k+1}
\|_2^2]$, we get the recursion
\begin{align}
W_2(\nu_{k+1},\pi)^2
		&\le \big\{\varrho W_2(\nu_{k},\pi) +
		(\nicefrac12)\big(h^4 M^{3}p\big)^{1/2} + (\nicefrac23)(2h^3p)^{1/2}M + h\delta\sqrt{p}\big\}^2
		+ \sigma^2 h^2 p\\
		&\stackrel{(a)}{\le}
		\big\{\varrho W_2(\nu_{k},\pi) +
		(\nicefrac12)\big(2h^3 M^{2}p\big)^{1/2} + (\nicefrac23)(2h^3p)^{1/2}M + h\delta\sqrt{p}\big\}^2
		+ \sigma^2 h^2 p\\
		&\stackrel{(b)}{\le}
		\big\{\varrho W_2(\nu_{k},\pi) + \alpha M\big(h^3 p\big)^{1/2} + h\delta\sqrt{p}\big\}^2
		+ \sigma^2 h^2 p,\label{DD}
\end{align}
where in $(a)$ we have used the condition $h\le 2/M$ whereas in $(b)$
we have put $\alpha = 7\sqrt{2}/6\le 1.65$.
\end{proof}

\subsection{Proof of \Cref{thOne}}

Using \Cref{propA} with $\sigma=\delta =0$, we get
$W_2(\nu_{k+1},\pi)\le \varrho\, W_2(\nu_{k},\pi) + \|\bV\|_{L_2}$ for all $k\in\NN$.
In view of \Cref{lemC}, this yields
\begin{align}
W_2(\nu_{k+1},\pi)
		\le \varrho\, W_2(\nu_{k},\pi) + \alpha M (h^3 p)^{1/2}.
\end{align}
Using this inequality repeatedly for $k+1,k,k-1,\ldots,1$, we get
\begin{align}
W_2(\nu_{k+1},\pi)
		&\le \varrho^{k+1}\, W_2(\nu_{0},\pi) + \alpha M (h^3 p)^{1/2}(1+\varrho+\ldots+\varrho^k)\\
		&\le \varrho^{k+1}\, W_2(\nu_{0},\pi) + \alpha M (h^3 p)^{1/2}(1-\varrho)^{-1}.
\end{align}
This completes the proof.

\subsection{Proof of \Cref{thOneBis}}
Recall that $\alpha = 7\sqrt{2}/6\le 1.65$.
\Cref{thOne} implies that using the step-size $h_k =2/(M+m)$ for $k=1,\ldots,K_1$, we get
\begin{align}
W_2(\nu_{K_1},\pi)
		&\le \Big(1+\frac{2m}{M-m}\Big)^{-K_1} W_2(\nu_0,\pi) +
			\frac{\alpha M}{m}\Big(\frac{2p}{m+M}\Big)^{1/2}\\
		&\le \frac{3.5M}{m}\Big(\frac{p}{M+m}\Big)^{1/2}.\label{F1}
\end{align}
Starting from this iteration $K_1$, we use a decreasing step-size
\begin{align}\label{H1}
h_{k+1}  = \frac{2}{M+m+(\nicefrac23)m(k-K_1)}.
\end{align}
Let us show by induction over $k$ that
\begin{align}
W_2(\nu_k,\pi)
		&\le \frac{3.5M}{m}\bigg(\frac{p}{M+m+(\nicefrac23)m(k-K_1)}\bigg)^{1/2},\qquad \forall\,k\ge K_1.\label{D5}
\end{align}
For $k=K_1$, this inequality is true in view of \eqref{F1}. Assume now that \eqref{D5}  is true for some $k$.
For $k+1$, we have
\begin{align}
W_2(\nu_{k+1},\pi)
	& \le (1-mh_{k+1}) W_2(\nu_{k},\pi) + \alpha M\sqrt{p}\; h_{k+1}^{3/2}\\
	& \le (1-mh_{k+1}) \frac{3.5M\sqrt{p}\, (h_{k+1}/2)^{1/2}}{m} + \alpha M\sqrt{p}\; h_{k+1}^{3/2}\\
	& \le (1-\frac13mh_{k+1}) \frac{3.5M\sqrt{p}\, (h_{k+1}/2)^{1/2}}{m}.
\end{align}
One can check that
\begin{align}
(1-\frac13mh_{k+1})(h_{k+1}/2)^{1/2}
		& = \frac{\sqrt{3}\,[m+3M+2m(k-K_1)]}{[3m+3M+2m(k-K_1)]^{3/2}} \\
		& \le \frac{\sqrt{3}\,[m+3M+2m(k-K_1)]^{1/2}}{3m+3M+2m(k-K_1)} \\
		& \le \frac{\sqrt{3}}{[3m+3M+2m(k+1-K_1)]^{1/2}}.
\end{align}
This completes the proof of the theorem.

\subsection{Proof of \Cref{th:three}}
    Let us denote by $\nu_k(\cdot|\bx)$ the conditional
    distribution of $\vartheta^{\rm MLMC}_k$ given
    $\eeta=\bx$. In view of \Cref{thOneBis}, we have
    \begin{equation}
        W_2\big(\nu_k(\cdot|\bx),\pi_1(\cdot|\bx)\big)
        \le \frac{3.5M\sqrt{p}}{m\sqrt{M+m+(\nicefrac23)m(k-K_1)}},\qquad \forall\bx\in H.
    \end{equation}
    This readily yields
    \begin{equation}
        \int_H W_2\big(\nu_k(\cdot|\bx),\pi_1(\cdot|\bx)\big)\,\pi_0(d\bx)
        \le \frac{3.5M\sqrt{p}}{m\sqrt{M+m+(\nicefrac23)m(k-K_1)}}.
    \end{equation}
    The last step is to apply the convexity of the Wasserstein distance,
    which means that for any probability measure $\pi_0$, we have
    $$
    \int_H W_2\big(\nu_k(\cdot|\bx),\pi_1(\cdot|\bx)\big)\,\pi_0(d\bx)
    \ge
    W_2\bigg(\int_H\nu_k(\cdot|\bx)\,\pi_0(d\bx),
    \int_H\pi_1(\cdot|\bx)\,\pi_0(d\bx)\bigg) =
    W_2(\nu_k,\pi).
    $$

\subsection{Proof of \Cref{thTwo}}

As explained in \Cref{sec:4}, the main new ingredient of the proof is
\Cref{lemD}, that has to be combined with \Cref{propA}. We postpone the proof
of \Cref{lemD} to \Cref{ssecLem} and do it in a more general form (see \Cref{lemE}).

In view of \Cref{propA}, we have
\begin{align}
W_2(\nu_{k+1},\pi)^2
		&\le \big\{(1-mh)W_2(\nu_{k},\pi) +\alpha M(h^3p)^{1/2} + h\delta\sqrt{p}\big\}^2
		+ \sigma^2 h^2 p.
\end{align}
We apply now \Cref{lemD} with $A=mh$, $B=\sigma h\sqrt{p}$ and $C = \alpha M(h^3p)^{1/2}+
h\delta\sqrt{p}$, which implies that $W_2(\nu_{k},\pi) $ is less than or equal to
\begin{align}
(1-mh)^kW_2(\nu_0,\pi) + \frac{\alpha M(hp)^{1/2}+
\delta\sqrt{p}}{m} + \frac{\sigma^2 h \sqrt{p}}{\alpha Mh^{1/2}+
\delta+(mh)^{1/2}\, \sigma }.
\end{align}
This completes the proof of the theorem.

\subsection{Proof of \Cref{thFour}}

Using the same construction and the same definitions as in the proof of \Cref{propA}, for
$\bDelta_k = \bL_{0}-\bvartheta_{k,\bh}$, we have
\begin{align}
\bDelta_{k+1} - \bDelta_k
	& =  h \bY_{k,\bh} - \int_{I_k}\nabla f(\bL_t)\,dt \\
	& = - h\big(\underbrace{\nabla f(\bvartheta_{k,\bh}+\bDelta_k)-\nabla f(\bvartheta_{k,\bh})
			}_{:=\bU}\big) \\
	&\qquad	- \sqrt2\,\underbrace{\int_0^h \int_0^t \nabla^2 f(\bL_s)d\bW_s\,dt}_{:=\bS} +
		h\bzeta_{k}\\
	&\qquad
			-\underbrace{\int_0^h\big(\nabla f(\bL_t) - \nabla f(\bL_{0})-\sqrt2\,\int_0^t\nabla^2
			f(\bL_s)d\bW_s\big)\,dt}_{:=\bar\bV}.
\end{align}
Using the following equalities of conditional expectations $\bfE[\bzeta_{k}|\bDelta_k,\bU,\bar\bV] =
\bfE[\bzeta_{k}|\bvartheta_{k,\bh},\bL_0,\bW] = \bfE[\bzeta_{k}|\bvartheta_{k,\bh},\bL_0] =
\bfE[\bzeta_{k}|\bvartheta_{k,\bh}]$ and $\bfE[\bS_h|\bvartheta_{k,\bh},\bL_0]=0$, we get
\begin{align}
\|\bDelta_{k+1} \|_{L_2}^2
		&\leq \big\|\bDelta_k  -h \bU -\bar\bV-\sqrt{2}\bS_h+
				h\bfE[\bzeta_{k}|\bvartheta_{k,\bh}]\big\|_{L_2}^2 + \sigma^2 h^2 p\\
		&\le \big\{\big(\|\bDelta_k  -h \bU\|_{L_2}^2 + 2 \|\bS_h\|_{L_2}^2\big)^{1/2} +
			h\delta\sqrt{p} +\|\bar\bV\|_{L_2}\big\}^2 + \sigma^2 h^2 p.\label{D2}
\end{align}
In addition, we have
\begin{align}
\|\bS_h\|_{L_2}^2
		& = \Big\|\int_0^h (h-s)\nabla^2 f(\bL_s)\,d\bW_s\Big\|_{L_2}^2\\
		& = \int_0^h (h-s)^2\bfE[\|\nabla^2 f(\bL_s)\|_F^2]\,ds
		\le (\nicefrac13)\, M^2 h^3 p.
\end{align}
Setting $x_k = \|\bDelta_{k} \|_{L_2} = W_2(\nu_{k},\pi)$ and using \Cref{lemA}, this yields
\begin{align}
x_{k+1}^2
		&\le \big\{\big((1-mh)^2x_k^2 + (\nicefrac23)\, M^2 h^3 p\big)^{1/2} +
			h\delta\sqrt{p} + \|\bar\bV\|_{L_2}\big\}^2 + \sigma^2 h^2 p.\label{D3}
\end{align}
Let us define $A = mh$,
$F =  (\nicefrac23)\, M^2 h^3 p$, $G = \sigma^2 h^2 p$ and\footnote{In view of \Cref{lemF} in
\Cref{ssecLem}, we have $h\delta\sqrt{p}+\|\bar\bV\|_{L_2}\le C$.}
$$
C = h\delta\sqrt{p} + 0.5 M_2h^2 p + 0.5 M^{3/2} h^2\sqrt{p}.
$$
Then
$$
x_{k+1}^2\le \big\{\big((1-A)^2x_k^2 + F\big)^{1/2} + C\big\}^2 + G.
$$			
One can deduce from this inequality that
$x_{k+1}^2\le \big((1-A)x_k + C\big)^{2} + F + G + 2 C\sqrt{F}$. Therefore,
using \eqref{inq3} of \Cref{lemE} below, we get
\begin{align}
x_{k}
		&\le (1-A)^k x_0 + \frac{C}{A} + \frac{F+G+2C\sqrt{F}}{C+\big(A(F+G+2C\sqrt{F})\big)^{1/2}}\\
		&\le (1-A)^k x_0 + (C/A) + 2(F/A)^{1/2} + \frac{G}{C+\sqrt{AG}}.
\end{align}
Replacing $A,C,F$ and $G$ by their respective expressions, we get the claim of the theorem.

\subsection{Proof of \Cref{thFive}}

To ease notation, throughout this proof, we will write $\nu_k$ and $\nu_k'$ instead of
$\nu_k^{\rm LMCO}$ and $\nu_k^{\rm LMCO'}$, respectively.

Let $\bD_0\sim \nu_k$ and $\bL_0\sim \pi$
be two random  variables such that $\|\bD_0-\bL_0\|_{L_2}^2= W_2(\nu_k,\pi)$. Let $\bW$ be a $p$-dimensional
Brownian motion independent of $(\bD_0,\bL_0)$. We define $\bL$ to be the Langevin
diffusion process \eqref{B} driven by $\bW$ and starting at $\bL_0$, whereas $\bD$ is the
process starting at $\bD_0$ and satisfying the stochastic differential equation
\begin{equation}\label{D-ornstein}
d\bD_t = -[\nabla f(\bD_0) + \nabla^2 f(\bD_0)(\bD_t-\bD_0)]\,dt+\sqrt{2}\,d\bW_t,\quad t\ge 0.
\end{equation}
This is an Ornstein-Uhlenbeck process. It can be expressed explicitly as a function of
$\bD_0$ and $\bW$. The corresponding expression implies that $\bD_h\sim \nu_{k+1}$ and, hence,
$W_2(\nu_{k+1},\pi)\le \|\bD_h-\bL_h\|_{L_2}^2$.

An important ingredient of our proof is the following version of the Gronwall lemma,
the proof of which is postponed to \Cref{ssecLem}.

\begin{lemma}\label{lemG}
Let $\balpha:[0,T]\times\Omega\to \RR^p$ be a continuous semi-martingale and
$\bfH:[0,T]\times\Omega\to\RR^{p\times p}$ be a random process with continuous paths
in the space of all symmetric $p\times p$ matrices such that $\bfH_s\bfH_t = \bfH_t\bfH_s$
for every $s,t\in[0,T]$. If $\bx:[0,T]\times\Omega\to \RR^p$ is a semi-martingale satisfying
the identity
\begin{align}\label{recx}
\bx_t = \balpha_t -\int_0^t \bfH_s\bx_s\,ds,\qquad \forall t\in[0,T],
\end{align}
then, for every $t\in[0,T]$,
\begin{align}\label{finalx}
\bx_t = \exp\Big\{-\int_0^t\bfH_s\,ds\Big\} \balpha_0 + \int_0^t \exp\Big\{-\int_s^t\bfH_u\,du\Big\}d\balpha_s.
\end{align}
\end{lemma}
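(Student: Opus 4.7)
The approach is a variation-of-constants argument via an integrating factor. Define the matrix-valued process
\begin{align}
\bM_t = \exp\Big\{\int_0^t\bfH_s\,ds\Big\},\qquad t\in[0,T].
\end{align}
Because the family $\{\bfH_s\}_{s\in[0,T]}$ is commuting, the symmetric matrices $\int_0^t\bfH_s\,ds$ for different $t$ all commute pairwise, so the usual identity $\exp(A+B)=\exp(A)\exp(B)$ applies. Consequently $\bM_t$ is continuously differentiable in $t$ (pathwise) with
\begin{align}
\tfrac{d}{dt}\bM_t = \bfH_t\bM_t = \bM_t\bfH_t,
\end{align}
and $\bM_t^{-1}\bM_s = \exp\bigl\{-\int_s^t\bfH_u\,du\bigr\}$ for every $s\le t$.

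Next I would apply the Itô product rule to $\bM_t\bx_t$. Since $\bM_t$ has absolutely continuous paths, its quadratic (co)variation with the semi-martingale $\bx$ vanishes and the product rule collapses to the ordinary one. Using the SDE $d\bx_t = d\balpha_t -\bfH_t\bx_t\,dt$ derived from \eqref{recx}, I get
\begin{align}
d(\bM_t\bx_t) = \bM_t\bfH_t\bx_t\,dt + \bM_t\,d\bx_t = \bM_t\bfH_t\bx_t\,dt + \bM_t\,d\balpha_t - \bM_t\bfH_t\bx_t\,dt = \bM_t\,d\balpha_t,
\end{align}
where the commutativity $\bM_t\bfH_t=\bfH_t\bM_t$ is essential to make the two $\bfH_t\bx_t$ terms cancel. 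Integrating from $0$ to $t$ and using $\bM_0=\bfI_p$ together with $\bx_0=\balpha_0$ (immediate from \eqref{recx} at $t=0$) yields
\begin{align}
\bM_t\bx_t = \balpha_0 + \int_0^t \bM_s\,d\balpha_s.
\end{align}

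Finally I multiply on the left by $\bM_t^{-1}=\exp\bigl\{-\int_0^t\bfH_s\,ds\bigr\}$ and use the identity $\bM_t^{-1}\bM_s=\exp\bigl\{-\int_s^t\bfH_u\,du\bigr\}$ to bring the factor inside the stochastic integral; this is legitimate because $\bM_t^{-1}$ is deterministic conditionally on the path of $\bfH$ and does not depend on $s$. The result is exactly \eqref{finalx}.

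The only delicate point is justifying $\tfrac{d}{dt}\bM_t=\bfH_t\bM_t$ and the cancellation in the product rule; both hinge on the commutativity hypothesis $\bfH_s\bfH_t=\bfH_t\bfH_s$, without which matrix exponentials of time integrals do not differentiate in the naive way. Once this hypothesis is invoked, the rest is a pathwise computation and a standard stochastic Fubini to move $\bM_t^{-1}$ inside the integral against $d\balpha_s$.
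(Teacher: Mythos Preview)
Your argument is correct and is the classical variation-of-constants derivation. The paper's proof uses the same ingredients---the integrating factor $\bM_t=\exp\{\int_0^t\bfH_u\,du\}$, its derivative $\bM_t'=\bfH_t\bM_t$ (valid thanks to commutativity), and the integration-by-parts formula for semi-martingales---but organizes them differently: instead of applying the product rule to $\bM_t\bx_t$ directly, it introduces the auxiliary process $\bv_t=-\bM_t\int_0^t\bfH_s\bx_s\,ds$, computes $\bv_t'$ to obtain a closed form for $\int_0^t\bfH_s\bx_s\,ds$, and then expands the right-hand side of \eqref{finalx} by integration by parts to match. Your route is a step shorter and arguably cleaner; the paper's route has the minor advantage of never writing a stochastic differential, working only with ordinary time derivatives and one semi-martingale integration by parts at the end. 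One small remark: the commutativity is really needed to justify $\tfrac{d}{dt}\bM_t=\bM_t\bfH_t$ in the first place; once that holds, the cancellation in your product-rule computation is automatic, so your parenthetical about commutativity being ``essential to make the two $\bfH_t\bx_t$ terms cancel'' slightly mislocates where the hypothesis enters.
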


We denote $\bX_t = \bL_t -\bL_0- (\bD_t-\bD_0)$, where $\bD_t$ is the random process  defined in \eqref{D-ornstein}
and $\bL_t$ is the Langevin diffusion driven by the same Wiener process $\bW$ and with initial
condition $\bL_0\sim\pi$. It is clear that
\begin{align}
\bX_t
		&= -\int_0^t \nabla f(\bL_s)\,ds+\int_0^t[\nabla f(\bD_0) + \nabla^2 f(\bD_0)(\bD_s-\bD_0)]\,ds\\
		&= -\int_0^t \big\{\nabla f(\bL_s)-\nabla f(\bD_0)-\nabla^2 f(\bD_0)(\bL_s-\bL_0)\big\}\,ds
		-\int_0^t \nabla^2 f(\bD_0)\bX_s\,ds.
\end{align}
Using \Cref{lemG}, we get
\begin{align}
\bX_t
		&= -\int_0^t e^{-s\nabla^2 f(\bD_0)} \big\{\nabla f(\bL_s)-\nabla f(\bD_0)-\nabla^2 f(\bD_0)(\bL_s-\bL_0)\big\}\,ds\\
		&=  \int_0^t e^{-s\nabla^2 f(\bD_0)}\,ds [\nabla f(\bD_0)-\nabla f(\bL_0)]\\
		&\qquad -\int_0^t e^{-s\nabla^2 f(\bD_0)}
		\big\{\nabla f(\bL_s)-\nabla f(\bL_0)-\nabla^2 f(\bL_0)(\bL_s-\bL_0)\big\}\,ds\\
		&\qquad -\int_0^t e^{-s\nabla^2 f(\bD_0)}
		[\nabla^2 f(\bD_0)-\nabla^2 f(\bL_0)]\int_0^s \nabla f(\bL_u)\,du\,ds\\
		&\qquad +\sqrt{2}\int_0^t e^{-s\nabla^2 f(\bD_0)}
		[\nabla^2 f(\bD_0)-\nabla^2 f(\bL_0)]\bW_s\,ds.		\label{fourint}
\end{align}
Let us set $\bDelta_t = \bL_t-\bD_t$. We have $\bX_t = \bDelta_t-\bDelta_0 = A_t-B_t-C_t+S_t$,
where $A_t$, $B_t$, $C_t$ and $S_t$ stand for the four integrals in \eqref{fourint}. We now evaluate
these terms separately. For the first one, using the notation $\bfH_0 = \nabla^2 f(\bD_0)$ and
the identity $\nabla f(\bL_0)-\nabla f(\bD_0) = \int_0^1 \nabla^2 f(\bD_0+x\bDelta_0)\,dx \bDelta_0$,
we get
\begin{align}
\|\bDelta_0+A_t\|_2
		&\le \|\bDelta_0-t\big(\nabla f(\bL_0)-\nabla f(\bD_0)\big)\|_2 \\
		&\qquad+ \int_0^t\|\bfI-e^{-s\bfH_0}\|\,ds
		\big\|\nabla f(\bL_0)-\nabla f(\bD_0)\big\|_2\\
		&\le (1-mt + 0.5M^2t^2)\|\bDelta_0\|_2.\label{At}
\end{align}
For the term $B_t$ with $t\le h\le m/M^2\le 1/M$, we can apply \eqref{eqB} to infer that
\begin{align}
\|B_t\|_{L_2}^2
		&\le 0.88 M_2t^2(p^2+2p)^{1/2}.\label{Bt}
\end{align}
As for $C_t$, in view of the inequality $\|\nabla^2 f(\bL_0)-\nabla^2 f(\bD_0)\|\le
M_2\|\bDelta_0\|_2\wedge M\le \sqrt{MM_2\|\bDelta_0\|_2}$, we have
\begin{align}
\|C_t\|_2
		&\le \sqrt{MM_2\|\bDelta_0\|_2}\int_0^t \int_0^s\|\nabla f(\bL_u)\|_2\,du\,ds\\
		&\le \mu\|\bDelta_0\|_2 + (4\mu)^{-1}MM_2\bigg(\int_0^t (t-u)\|\nabla f(\bL_u)\|_2\,du\bigg)^2.
\end{align}
On the other hand, the fact that $\bfE[\|\nabla f(\bL_u)\|_2^4]\le M^2(p^2+2p)$ yields
\begin{align}\label{eq15}
\bigg(\int_0^t (t-u)(\bfE[\|\nabla f(\bL_u)\|_2^4])^{1/4}\,du\bigg)^2 \le \frac{Mt^4(p^2+2p)^{1/2}}{4}.
\end{align}
This implies the inequality
\begin{align}
\|C_t\|_{L_2}
		&\le \mu W_2(\nu_k,\pi) + (16\mu)^{-1}M^2M_2t^4(p+1). \label{Ct}
\end{align}
Finally, using the integration by parts formula for semi-martingales, one can easily write $S_t$
as a stochastic integral with respect to $\bW$ and derive from that representation the inequality
\begin{align}
\|S_t\|_{L_2}^2
		&\le 2\bfE\bigg[\int_0^t \bigg\|\int_u^te^{-s\bfH_0}\,ds\big(\nabla^2 f(\bL_0)-\nabla^2 f(\bD_0)\big)\bigg\|_F^2\,du\bigg]\\
		&\le 2p\bfE[(M_2\|\bDelta_0\|_2\wedge M)^2] \int_0^t (t-u)^2\,du \le (\nicefrac23)M_2Mpt^3\|\bDelta_0\|_{L_2}^2.
		\label{St}
\end{align}
Putting all these pieces together, taking the expectation, using the Minkowski inequality,
the equality $\bfE[(\bDelta_0+A_h)^\top S_h]=0$ and the inequality $\sqrt{a^2+b}\le a+b/(2a)$,  we get
\begin{align}
\|\bDelta_h\|_{L_2}^2
		&= \|\bDelta_0 +A_h-B_h-C_h+S_h\|_{L_2}^2\\
		&\le \big(\|\bDelta_0+A_h\|_{L_2}^2+\|S_h\|_{L_2}^2\big)^{1/2}+\|B_h\|_{L_2}^2+\|C_h\|_{L_2}^2\\
		&\le \big(1-mh + 0.5M^2h^2+\mu\big)\|\bDelta_0\|_{L_2}^2+ \frac{M_2Mph^3}{3(1-mh + 0.5M^2h^2)}\\
		&\qquad + 0.88 M_2h^2(p^2+2p)^{1/2} + \frac{M^2M_2h^4}{16\mu}(p+1).\label{eq12'}
\end{align}
Let $\mu$ be any real number smaller than $0.5h(m - 0.5M^2h)$;
Eq.\ \eqref{eq12'} and the inequality $p^2+2p\le (p+1)^2$ yield
\begin{align}
W_2(\nu_{k+1},\pi)
		&\le (1 - \mu)W_2(\nu_{k},\pi)+ \frac{M_2Mph^3}{3(1-2\mu)}+ 0.88 M_2h^2(p+1) \\
		&\qquad + \frac{M^2M_2h^4}{16\mu}(p+1).\label{eq13'}
\end{align}
Since $h\le m/M^2$, we can choose $\mu = 0.25mh$ so that $1-2\mu = 1-0.5 mh \ge 0.5$ and
\begin{align}
W_2(\nu_{k+1},\pi)
		&\le (1 - 0.25mh)W_2(\nu_{k},\pi)+ \frac{2M_2Mph^3}{3}+ 0.88 M_2h^2(p+1)\\
		&\qquad +
		\frac{M^2M_2h^3}{4m}(p+1)\\
		&\le (1 - 0.25mh)W_2(\nu_{k},\pi)+ 1.8 M_2h^2(p+1).\label{eq13}
\end{align}
This recursion implies the inequality
\begin{align}
W_2(\nu_{k},\pi)
		&\le (1-0.25mh)^kW_2(\nu_{0},\pi)+ \frac{1.8M_2h(p+1)}{0.25m}\\
		&= (1-0.25mh)^kW_2(\nu_{0},\pi)+ \frac{7.2 M_2h(p+1)}{m}.
\end{align}
This completes the proof of claim \eqref{A3'} of the theorem.

To establish inequality \eqref{A3''}, we follow the same steps as in the proof of \eqref{A3'}, with a slightly different
choice of the process $\bD$. More precisely, we define $\bD$ by
\begin{align}
\bD_t -\bD_0 &= -(t\bfI_p-0.5t^2\nabla^2 f(\bD_0))\nabla f(\bD_0) + \sqrt{2}\int_0^t(\bfI-(t-u)\nabla^2 f(\bD_0))\,d\bW_u.
\end{align}
One can check that the conditional distribution of $\bD_h$ given $\bD_0=\bx$ coincides with the
conditional distribution of $\bvartheta_{k+1,h}^{\rm LMCO'}$ given $\bvartheta_{k,h}^{\rm LMCO'}=\bx$.
Therefore, if $\bD_0\sim \nu'_k$, then $\bD_h\sim \nu'_{k+1}$ and, consequently,
$W_2(\nu'_{k+1},\pi)^2 \le \bfE[\|\bD_h-\bL_h\|_2^2]$.

To ease notation, we set $\bfH_0 = \nabla^2 f(\bD_0)$. The process $\bD$ satisfies the SDE
\begin{align}\label{SDE2}
d\bD_t &= -\big[(\bfI_p-t\nabla^2 f(\bD_0))\nabla f(\bD_0) + \sqrt{2}\,\bfH_0\bW_t\big]\,dt+\sqrt{2}\,d\bW_t,
\end{align}
which implies that
\begin{align}\label{SDE3}
d\bD_t = &-\big[\nabla f(\bD_0) + \nabla^2 f(\bD_0)(\bD_t-\bD_0)\big]\,dt +\sqrt{2}\,d\bW_t\\
&-0.5 t^2\bfH_0^2\nabla f(\bD_0)\,dt - \sqrt{2}\,\bfH_0^2\int_0^t (t-u)\,d\bW_{u}\,dt.
\end{align}
Proceeding in the same way as for getting \eqref{fourint}, we arrive at the decomposition
$\bX_t = \bDelta_t-\bDelta_0 = A_t-B_t-C_t+S_t - E_t-F_t$,
where $A_t$, $B_t$, $C_t$ and $S_t$ stand for the four integrals in \eqref{fourint} whereas $E_t$ and $F_t$
are
\begin{align}
E_t &= 0.5\int_0^t  e^{-s\bfH_0}  s^2\, ds\,\bfH_0^2\nabla f(\bD_0)\\
F_t &= \sqrt{2}\,\bfH_0^2\int_0^t e^{-s\bfH_0} \int_0^s (s-u) \,d\bW_u\, ds.
\end{align}
Using the properties of the stochastic integral, we get
\begin{align}
\bfE[\|F_h\|_2^2]
		&= 2\bfE\Big[\Big\|\bfH_0^2\int_0^h e^{-s\bfH_0} \int_0^s (s-u) \,d\bW_u\, ds\Big\|_2^2\Big]\\
		&= 2\bfE\Big[\Big\|\int_0^h \int_u^h \bfH_0^2e^{-s\bfH_0} (s-u)\,ds\,d\bW_u\Big\|_2^2\Big]\\
		&= 2\int_0^h\Big\|\int_u^h \bfH_0^2e^{-s\bfH_0} (s-u)\,ds\Big\|_F^2\,du\\
		&\le 2M^4p \int_0^h\Big(\int_u^h (s-u)\,ds\Big)^2\,du = \frac{M^4h^5p}{10}.\label{Ft}
\end{align}
On the other hand,
\begin{align}
\|E_h\|_2
		&\le 0.5M^2\int_0^hs^2\,ds\|\nabla f(\bD_0)\|_2
		\le \frac{M^2h^3}{6}\big(\|\nabla f(\bL_0)\|_2+M\|\bDelta_0\|_2\big),
\end{align}
which, in view of \Cref{lemB}, implies that
\begin{align}
\|E_h\|_{L_2}^2
		\le \frac{M^2h^3}{6}\big(\sqrt{Mp}+M W_2(\nu'_k,\pi)\big).\label{Et}
\end{align}
Proceeding as in \eqref{eq12'} and using \eqref{eq15}, we get
\begin{align}
{\|\bDelta_h\|}_{L_2}
		& = \|\bDelta_0+A_h-B_h-C_h+S_h-E_h-F_h\|_{L_2} \\
		&\le \|\bDelta_0+A_h+S_h-F_h\|_{L_2} + \|B_h\|_{L_2} + \|C_h\|_{L_2}  +\|E_h\|_{L_2}\\
		&\le (\|\bDelta_0+A_h\|_{L_2}^2 +\|S_h-F_h\|_{L_2}^2)^{1/2} + \|B_h\|_{L_2} + \|C_h\|_{L_2}
				+ \|E_h\|_{L_2}.\label{decomp3}
\end{align}
Using the last but one estimate in \eqref{St}, in conjunction with \eqref{Ft}, we get
inequalities
\begin{align}
\|S_h\|_{L_2}^2 &\le (\nicefrac23)M_2Mh^3p W_2(\nu_k',\pi)\\
|\bfE[S_h^\top F_h]| &\le (\nicefrac1{\sqrt{15}})M^2 M_2 h^4pW_2(\nu_k',\pi),
\end{align}
which, for $h\le 3m/(4M^2)$, imply that $\|S_h-F_h\|_{L_2}^2$ is less than or equal to
\begin{align}
(\nicefrac23)M_2Mh^3p W_2(\nu_k',\pi) &+ (\nicefrac2{\sqrt{15}})M^2 M_2 h^4pW_2(\nu_k',\pi)+(\nicefrac1{10})M^4h^5p\\
		&\le 1.06 M_2Mh^3p W_2(\nu_k',\pi) + 0.1 M^4 h^5p.
\end{align}
Injecting this bound, \eqref{At}, \eqref{Bt}, \eqref{Ct} and \eqref{Et} in \eqref{decomp3}, we arrive at
\begin{align}
{\|\bDelta_h\|}_{L_2}
		&\le \big\{\big[(1-mh+0.5M^2h^2)^2W_2(\nu_k',\pi)^2 + 1.06M_2Mh^3p W_2(\nu_k',\pi)  + 0.1 M^4 h^5p\big\}^{1/2} \\
		&\quad+ 0.88M_2h^2(p+1) + \Big(\mu + \frac{M^3h^3}{6}\Big)W_2(\nu_k',\pi) +\frac{M^2M_2h^4(p+1)}{16\mu}
				+ \frac{M^{5/2}h^3\sqrt{p}}{6}.\label{decomp3'}
\end{align}
In view of the inequality $\sqrt{a^2+b+c}\le \sqrt{a^2+c}+(\nicefrac{b}{2a})$, the last display leads to
\begin{align}
W_2(\nu_{k+1}',\pi)
		&\le \big\{\big[(1-mh+0.5M^2h^2)^2W_2(\nu_k',\pi)^2 +  0.1 M^4 h^5p\big\}^{1/2} \\
		&\quad+ \frac{0.53M_2Mh^3p}{1-mh+0.5M^2h^2} + 0.88M_2h^2(p+1) + \Big(\mu + \frac{M^3h^3}{6}\Big)W_2(\nu_k',\pi)\\
		&\quad+ \frac{M^2M_2h^4(p+1)}{16\mu}
				+ \frac{M^{5/2}h^3\sqrt{p}}{6}.\label{decomp4}
\end{align}
For $h\le 3m/(4M^2)$ and  $\mu = 0.25mh$, we can use the inequality $1-mh+0.5M^2h^2 \ge 17/32$ and
simplify the last display as follows:
\begin{align}
W_2(\nu_{k+1}',\pi)
		&\le \big\{\big[(1-mh+0.5M^2h^2)^2W_2(\nu_k',\pi)^2 +  0.1 M^4 h^5p\big\}^{1/2} \\
		&\quad+ \frac{0.3975 M_2h^2(p+1)}{1-mh+0.5M^2h^2} + 0.88M_2h^2(p+1) + \Big(\mu + \frac{M^3h^3}{6}\Big)W_2(\nu_k',\pi)\\
		&\quad+ \frac{3M_2h^2(p+1)}{16}
				+ \frac{M^{5/2}h^3\sqrt{p}}{6}\\
		&\le \big\{(1-mh+0.5M^2h^2)^2W_2(\nu_k',\pi)^2 +  0.1 M^4 h^5p\big\}^{1/2} \\
		&\quad + \Big(0.25mh + \frac{M^3h^3}{6}\Big)W_2(\nu_k',\pi) + 1.82M_2h^2(p+1) + \frac{M^{5/2}h^3\sqrt{p}}{6}.
\end{align}
We apply \Cref{lemI} to the sequence $x_k = W_2(\nu_k',\pi)$ with $A = mh-0.5M^2h^2$ and
$D = 0.25mh + M^3h^3/6$. For $h\le 3m/(4M^2)$ we have $A-D = 0.75mh- 0.5M^2h^2- (Mh)^3/6 \ge 0.25mh$ and
$A+D \le 1.25 mh -(3/8)M^2h^2 \le	0.727$. This yields
\begin{align}
W_2(\nu_{k+1}',\pi)
		&\le (1-0.25 mh)^k W_2(\nu'_0,\pi) + \frac{7.28 M_2h(p+1)}{m}  + \frac{2M^{5/2}h^2\sqrt{p}}{3m}+
		\frac{2\sqrt{0.1}\,M^2h^2\sqrt{p}}{\sqrt{1.273m}}\\
		&\le (1-0.25 mh)^k W_2(\nu'_0,\pi) + \frac{7.28 M_2h(p+1)}{m}  + \frac{1.23M^{5/2}h^2\sqrt{p}}{m}.
\end{align}
This completes the proof of \eqref{A3''} and that of the theorem.

\begin{proof}[Proof of \Cref{propB}]
Let us denote $\bfM_k =\int_0^he^{-s\bfH_k}\,ds\int_0^1 \nabla^2 f(\bD_{kh}+x\bDelta_{k})\,dx$.
From \eqref{fourint}, we have $\bDelta_{k+1}=\bDelta_k + A_{k,h}  + G_{k,h}$
with
\begin{align}
A_{k,h} &= \int_0^he^{-s\bfH_k}\,ds\big(\nabla f(\bD_{kh})-\nabla f(\bL_{kh})\big)
			=  -\bfM_k\bDelta_k,\\
G_{k,h} &= \int_0^h e^{-s\bfH_k} \big(\nabla f(\bL_{kh})-\nabla f(\bL_s)+\bfH_k(\bL_s-\bL_{kh})\big)\,ds.
\end{align}
Using the fact that
\begin{align}
\bigg\|\int_0^1 \nabla^2 f(\bD_{kh}+x\bDelta_{k})\,dx - \bfH_k\bigg\|
		& \le  \int_0^1 \big\|\nabla^2 f(\bD_{kh}+x\bDelta_{k}) - \bfH_k\big\|\,dx\le \frac{M_2}2\,\|\bDelta_k\|_2,
\end{align}
we get $\|\bDelta_k+A_{k,h}\|_2= \|(\bfI-\bfM_k)\bDelta_k\|_2\le \frac{M_2}{2m}\,\|\bDelta_k\|_2^2+e^{-mh}\|\bDelta_k\|_2$.
This further leads to the recursive inequality
\begin{align}
\|\bDelta_{k+1}\|_2 &\le \frac{M_2}{2m}\,\|\bDelta_k\|_2^2+e^{-mh}\|\bDelta_k\|_2 + \|G_{k,h}\|_2.
\end{align}
In view of the Minkowski inequality, this yields
\begin{align}\label{rec2}
(\bfE[\|\bDelta_{k+1}\|_2^q])^{1/q} &\le \frac{M_2}{2m}\,\bfE[\|\bDelta_k\|_2^{2q}]^{1/q}+e^{-mh}
\bfE[\|\bDelta_k\|_2^{2q}]^{1/2q} + \bfE[\|G_{k,h}\|_2^q]^{1/q}.
\end{align}
We choose some $K\in\NN$ and define the sequence $\{x_0,\ldots,x_K\}$ by setting
$x_k^{2^{K+1-k}} = \bfE[\|\bDelta_{k}\|_2^{2^{K+1-k}}]$. Choosing in \eqref{rec2} $q = 2^{K-k}$,
we get
\begin{align}\label{rec3}
x_{k+1} &\le \frac{M_2}{2m}\,x_k^2+e^{-mh}
x_k + \bfE[\|G_{k,h}\|_2^{2^{K-k}}]^{2^{k-K}},\quad k=0,1,\ldots,K-1.
\end{align}
We are in a position to apply
\Cref{lemH} to the sequence $\{x_k\}_{k=0,\ldots,K}$. This yields
\begin{align}
x_{K}
		&\le \frac{2m}{M_2}\bigg(\frac{M_2x_0}{2m}+\frac12e^{-mh}\bigg)^{2^K}
		\exp\bigg\{2^{K}
		\frac{M_2\max_{k} \bfE[\|G_{k,h}\|_2^{2^K}]^{2^{-K}} + me^{-mh}}
		{m(\frac{M_2x_0}{2m}+\frac12e^{-mh})^{2^{K}}}\bigg\},\label{xK}
\end{align}
where $\max_k$ is a short notation for $\max_{k=0,1,\ldots,K-1}$.
It suffices now to upper bound the moments of $\|G_{k,h}\|_2$. We have
\begin{align}
\bfE[\|G_{k,h}\|_2^{q}]^{1/q}
		&\le M\int_0^h e^{-sm} \big(\bfE[\|\bL_{kh+s}-\bL_{kh}\|_2^{q}]\big)^{1/q}\,ds\\
		&\le M\int_0^h e^{-sm} \Big\{\big(\bfE[\|\int_0^s\nabla f(\bL_{kh+u})\,du\|_2^{q}]\big)^{1/q}
				+\sqrt2\big(\bfE[\|\bW_s\|_2^{q}]\big)^{1/q}\Big\}\,ds\\
		&\le M\int_0^h e^{-sm} s\,ds\big(\bfE[\|\nabla f(\bL_0)\|_2^{q}]\big)^{1/q} +M\sqrt{2p+q-2}
	  \int_0^se^{-sm}\sqrt{s}\,ds\\
		&\le \frac{M}{m^2}\big(\bfE[\|\nabla f(\bL_0)\|_2^{q}]\big)^{1/q} + \frac{M}{2m^{3/2}}\sqrt{(2p+q-2)\pi}.
\end{align}
On the other hand, by integration by parts, for every $q\in2\NN$, we have
\begin{align}
\bfE[\|\nabla f(\bL_0)\|_2^{q}]
	&= -\int_{\RR^p} \|\nabla f(\bx)\|_2^{q-2}\,\nabla f(\bx)\!^\top d \pi(\bx)\\
	&= \sum_{\ell=1}^p \int_{\RR^p} \partial_\ell \Big(\|\nabla f(\bx)\|_2^{q-2}\,\partial_\ell f(\bx)\Big)  \pi(\bx)\,d\bx\\
	&\le M(p+q-2)\bfE[\|\nabla f(\bL_0)\|_2^{q-2}].
\end{align}
This yields $(\bfE[\|\nabla f(\bL_0)\|_2^{q}])^{1/q}\le \sqrt{M(p+0.5q-1)}$. Combining all these estimates, we arrive at
$$
\bfE[\|G_{k,h}\|_2^{q}]^{1/q} \le  \frac{1.6M^{3/2}\sqrt{2p+q-2}}{m^2}.
$$
Combining this inequality with \eqref{xK} and replacing $x_K$ by $(\bfE[\|\bDelta_K\|_2^2])^{1/2}$, we get
\begin{align}
(\bfE[\|\bDelta_K\|_2^2])^{1/2}
		&\le \frac{2m}{M_2}\bigg(\frac{M_2x_0}{2m}+\frac12e^{-mh}\bigg)^{2^K}
		\exp\bigg\{2^{K}
		\frac{1.6 M_2 M^{3/2}\sqrt{2p+2^{K-1}-2} + m^3e^{-mh}}
		{m^3(\frac{M_2x_0}{2m}+\frac12e^{-mh})^{2^{K}}}\bigg\}.
\end{align}
This completes the proof of the proposition.
\end{proof}

\subsection{Proofs of lemmas}\label{ssecLem}

Here we provide the proofs of \Cref{lemA}, \Cref{lemB} and \Cref{lemC}.
\begin{proof}[Proof of \Cref{lemA}]
We start by recalling the following inequality  \citep[Theorem 2.12]{Nest}, true for any $m$-strongly convex and $M$-gradient Lipschitz function $f$:
\begin{equation}\label{thnest}
    (\by-\bx)^\top\left(\nabla f(\by)- \nabla f(\bx)\right) \geq \frac{mM}{m+M}\|\by-\bx\|^2_2 +\frac{1}{m+M}\left\|\nabla f(\by) - \nabla f(\bx)\right\|_2^2,
\end{equation}
for all vectors $\bx$ and $y$ from $\RR^p$.  This yields
\begin{align}
    \|\by-\bx-h(\nabla f(\by) - &\nabla f(\bx))\|_2^2 \\
    &= \|\by-\bx\|^2_2 - 2h(\by-\bx)^\top(\nabla f(\by)  - \nabla f(\bx)) + h^2 \|\nabla f(\by) - \nabla f(\bx)\|_2^2\\
    &\leq \left(1 - \frac{2hmM}{m+M}\right)\|\by-\bx\|_2^2 +
    h\left(h - \frac{2}{m+M}\right)\|\nabla f(\by) - \nabla f(\bx)\|_2^2.
\end{align}
Since $f$ is $m$-strongly convex,  we have (\cite{Nest}, Theorem 2.1.9)
\begin{equation}
    \|\nabla f(\by) - \nabla f(\bx)\|_2 \geq m\|\by-\bx\|_2.
\end{equation}
In the case $h \le \frac{2}{m+M}$, applying  the previous result to the second summand, we get
\begin{equation}
      \|\by-\bx-h(\nabla f(\by) - \nabla f(\bx))\|_2^2 \leq (1-hm)^2 \|\by-\bx\|^2.
\end{equation}
In the case when $h \geq \frac{2}{m+M}$, we
use the Lipschitz continuity of $\nabla f$, which leads to
\begin{equation}
      \|\by-\bx-h(\nabla f(\by) - \nabla f(\bx))\|_2^2 \leq (hM - 1)^2 \|\by-\bx\|^2.
\end{equation}
Summing up, for all $h\in(0,2/M)$ we have shown
\begin{equation}
      \|\by-\bx-h(\nabla f(\by) - \nabla f(\bx))\|_2^2 \leq \left\{(1-hm)^2 \vee(hM - 1)^2\right\} \|\by-\bx\|^2.
\end{equation}
This completes the proof.
\end{proof}
\begin{proof}[Proof of \Cref{lemB}]
We start the proof with the case $p=1$. The function
$x\mapsto f'(x)$ being Lipschitz continuous is almost surely differentiable.
Furthermore, it is clear that $|f''(x)|\le M$  for every $x$ for which
this second derivative exists. The result of \citep[Theorem 7.20]{rudin87}
implies that
\begin{equation}
f'(x)-f'(0) = \int_0^x f''(y)\,dy.
\end{equation}
Therefore, using the relation  $f'(x)\,\pi(x) = -\pi'(x)$, we get
\begin{align}
\int_{\RR} f'(x)^2\,\pi(x)\,dx
		& = f'(0)\int_{\RR} f'(x)\,\pi(x)\,dx +
				\int_{\RR}\Big(\int_0^x f''(y)\,dy\Big) f'(x)\,\pi(x)\,dx \\
		& = -f'(0)\int_{\RR} \pi'(x)\,dx -
				\int_{\RR}\Big(\int_0^x f''(y)\,dy\Big) \pi'(x)\,dx \\
		& = -\int_{0}^\infty\int_0^x f''(y)\,\pi'(x)\,dy\,dx
				+\int_{-\infty}^0\int_x^0 f''(y)\,\pi'(x)\,dy\,dx.
\end{align}
In view of Fubini's theorem, we arrive at
\begin{align}\label{one-dim}
\int_{\RR} f'(x)^2\,\pi(x)\,dx & =
		\int_{0}^\infty f''(y)\,\pi(y)\,dy
				+\int_{-\infty}^0 f''(y)\,\pi(y)\,dy\le M.
\end{align}
Now let us return to the multidimensional case:
\begin{equation}
    \int_{\RR^p} \|\nabla f(\bx)\|_2^2\,\pi(\bx)\,d\bx =
    \sum\limits_{k=1}^{p}\int_{\RR^p} \left(\frac{\partial f}{\partial x_k} (\bx)\right)^2\pi(\bx)\,d\bx.
\end{equation}
We will show that each of the summands is less than $M$, thus the sum is less than
$Mp$. Let us prove it for $k=1$. The proof is similar for the case $k>1$.
Using Fubini's theorem, we have
\begin{equation}
    \int_{\RR^p} \left(\frac{\partial f}{\partial x_1} (\bx)\right)^2\pi(\bx)\,d\bx
    = \int_{\RR}\ldots \int_{\RR} \left(\frac{\partial f}{\partial x_1} (x_1,x_2,\ldots,x_p)\right)^2\pi(x_1,x_2,\ldots,x_p)\,dx_1dx_2\ldots dx_p.
\end{equation}
Let us fix the $(p-1)$-tuple $(x_2,x_3,\ldots,x_p)$ and define functions $g$ and $\eta$
as $g(t) = f(t,x_2,\ldots,x_p)$ and
$\eta(t) = \pi(t,x_2,\ldots,x_p)$, respectively.
It is easy to verify that $\eta$ is an integrable
log-concave function, with $g$ as its potential. The latter is also differentiable and its
derivative is Lipschitz-continuous with constant $M$. Thus we have
\begin{equation}
     \int_{\RR} \left(\frac{\partial f}{\partial x_1} (x_1,x_2,\ldots,x_p)\right)^2\pi(x_1,x_2,\ldots,x_p)\,dx_1
     =  \int_{\RR} \left(g'(t)\right)^2\eta(t)dt.
\end{equation}
From the definition one can verify that $\int_{\RR}\eta(t)dt = \pi_1(x_2,\ldots,x_p)$, where $\pi_1$ is the marginal distribution of all the coordinates except the first. Therefore,
\begin{align}
    \int_{\RR} g'(t)^2\eta(t)dt &= \pi_1(x_2,\ldots,x_p)
    \int_{\RR} g'(t)^2\frac{\eta(t)}{\pi_1(x_2,\ldots,x_p)}dt\\
    &\leq M\pi_1(x_2,\ldots,x_p)
\end{align}
The last inequality is true due to \eqref{one-dim}. Returning
to our initial integral, we obtain
\begin{equation}
    \int_{\RR^p} \left(\frac{\partial f}{\partial x_1} (\bx)\right)^2\pi(\bx)\,d\bx
    \leq M\int_{\RR^{p-1}}\pi_1(x_2,\ldots,x_p) dx_2\ldots dx_p = M.
\end{equation}
This completes the proof.
\end{proof}

\begin{proof}[Proof of \Cref{lemC}]
Since the process $\bL$ is stationary, $V(a)$ has the same distribution as $V(0)$. For this
reason, it suffices to prove the claim of the lemma for $a=0$ only.
Using the Cauchy-Schwarz inequality and the Lipschitz continuity of $f$, we get
\begin{align}
\|\bV(0)\|_{L_2}
		& = \Big\|\int_{0}^{h}\big(\nabla f(\bL_t) - \nabla f(\bL_{0})\big)\,dt\Big\|_{L_2}\\
		& \le  \int_{0}^{h}\big\|\nabla f(\bL_t) - \nabla f(\bL_{0})\big\|_{L_2}\,dt\\
		& \le  M\int_{0}^{h}\big\|\bL_t - \bL_{0}\big\|_{L_2}\,dt.
\end{align}		
Combining this inequality with the definition of $\bL_t$, we arrive at
\begin{align}		
\|\bV(0)\|_{L_2}
		& \le M\int_{0}^{h}\big\|-\int_{0}^t \nabla f(\bL_s)\,ds + \sqrt{2}\,\bW_{t}\big\|_{L_2}\,dt\\
		& \le M\int_{0}^{h}\big\|\int_{0}^t \nabla f(\bL_s)\,ds\big\|_{L_2}\,dt
				+ M\int_{0}^{h}\big\|\sqrt{2}\,\bW_{t}\big\|_{L_2}\,dt\\
		& \le M\int_{0}^{h}\int_{0}^t \|\nabla f(\bL_s)\|_{L_2}\,ds\,dt
				+ M\int_{0}^{h}\sqrt{2pt} \,dt.
\end{align}
In view of the stationarity of $\bL_t$, we have $\|\nabla f(\bL_s)\|_{L_2} = \|\nabla f(\bL_0)\|_{L_2}$,
which leads to
\begin{align}		
\|\bV(0)\|_{L_2}
		& \le (\nicefrac{1}{2}) M h^2 \big\|\nabla f(\bL_0)\big\|_{L_2} + (\nicefrac{2}{3}) M\sqrt{2p}\;  h^{3/2}.
\end{align}
To complete the proof, it suffices to apply \Cref{lemB}.
\end{proof}

\begin{lemma}\label{lemF}
Let us denote
\begin{align}
\tilde\bV &=\int_0^h\big(\nabla f(\bL_t) - \nabla f(\bL_{0})-\nabla^2 f(\bL_0)(\bL_t-\bL_0)\big)\,dt,\\
\bar\bV &=\int_0^h\Big\{\nabla f(\bL_t) - \nabla f(\bL_{0})-\sqrt2\,\int_0^t\nabla^2 f(\bL_s)d\bW_s\Big\}\,dt,
\end{align}
with $f$ satisfying {\bf Condition F} and $h\le 1/M$, then
\begin{align}
(\bfE[\|\tilde\bV\|_2^2])^{1/2} &\le 0.877 M_2h^2(p^2+2p)^{1/2},\label{eqB}\\
\|\bar\bV\|_{L_2} &\le (\nicefrac12)(M^{3/2}\sqrt{p}+M_2p) h^2.		
\label{eqC}
\end{align}
\end{lemma}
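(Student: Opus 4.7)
My plan splits into the two bounds, which use quite different mechanisms. Both rely on the stationarity of $\bL$ and on the a-priori moment bound $\bfE[\|\nabla f(\bL_0)\|_2^2]\le Mp$ of \Cref{lemB}.

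For \eqref{eqB}, I will first invoke the Hessian-Lipschitz hypothesis: writing $\nabla f(\bL_t) - \nabla f(\bL_{0}) = \int_0^1 \nabla^2 f(\bL_0+r(\bL_t-\bL_0))\,dr\cdot (\bL_t-\bL_0)$, the integrand of $\tilde\bV$ is pointwise bounded by $\frac{M_2}{2}\|\bL_t-\bL_0\|_2^2$. Applying Minkowski's integral inequality \eqref{eqA} with $p=2$ then yields
\begin{align*}
\|\tilde\bV\|_{L_2}\le \frac{M_2}{2}\int_0^h \|\bL_t-\bL_0\|_{L_4}^2\,dt.
\end{align*}
To control $\|\bL_t-\bL_0\|_{L_4}$, I substitute the SDE $\bL_t-\bL_0 = -\int_0^t\nabla f(\bL_s)\,ds + \sqrt{2}\,\bW_t$ and apply Minkowski in $L_4$. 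The Brownian contribution is computed explicitly via the chi-square moment $\bfE[\|\bW_t\|_2^4]=t^2(p^2+2p)$; the drift contribution uses stationarity together with the integration-by-parts bound $(\bfE[\|\nabla f(\bL_0)\|_2^4])^{1/4}\le \sqrt{M(p+1)}$ (a refinement of \Cref{lemB} of the same type used later in the proof of \Cref{propB}). The condition $h\le 1/M$ is then used to absorb the drift factor into the Brownian scale $(p^2+2p)^{1/2}$, producing the universal constant $0.877$ after an optimized Young-inequality split of the cross term.

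For \eqref{eqC}, the idea is to eliminate the stochastic integral in the definition of $\bar\bV$ by an application of It\^o's formula to the map $\btheta\mapsto \nabla f(\btheta)$ along the Langevin diffusion:
\begin{align*}
\nabla f(\bL_t)-\nabla f(\bL_0) = -\int_0^t \nabla^2 f(\bL_s)\nabla f(\bL_s)\,ds + \sqrt{2}\int_0^t \nabla^2 f(\bL_s)\,d\bW_s + \int_0^t\nabla(\Delta f)(\bL_s)\,ds.
\end{align*}
The stochastic integral is exactly the quantity subtracted in the definition of $\bar\bV$, so it cancels and leaves a deterministic-looking double integral. Bounding the first summand in $L_2$ by $\frac{h^2}{2}\cdot M\sqrt{Mp} = \frac{1}{2}M^{3/2}\sqrt{p}\,h^2$ (using $\|\nabla^2 f\|\le M$ and \Cref{lemB}), and observing that $\Delta f$ is $(M_2 p)$-Lipschitz since
\begin{align*}
|\Delta f(\bx)-\Delta f(\by)|=|\mathrm{tr}(\nabla^2 f(\bx)-\nabla^2 f(\by))|\le p\,\|\nabla^2 f(\bx)-\nabla^2 f(\by)\|\le M_2\,p\,\|\bx-\by\|_2,
\end{align*}
so that $\|\nabla(\Delta f)\|_\infty\le M_2 p$, yields the second summand $\frac{M_2 p}{2}h^2$ and combines into \eqref{eqC}.

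The main obstacle is the sharp constant $0.877$ in \eqref{eqB}: a careless Minkowski chain produces cross terms in $t^{3/2}$ which, if treated crudely, inflate the constant past $1$. Optimizing the Young split of the mixed drift-Brownian term, and crucially using $Mh\le 1$, is what brings the constant below $1$. A secondary technical subtlety concerns the use of It\^o in \eqref{eqC}: condition \textbf{F} only gives $f\in C^{2,1}$, so $\nabla(\Delta f)$ exists merely almost everywhere. I would handle this by mollifying $f$ by a smooth kernel, applying It\^o to the mollified function, and passing to the limit using the uniform bound $\|\nabla(\Delta f^\epsilon)\|\le M_2 p$ that survives the mollification.
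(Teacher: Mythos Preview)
Your proposal is correct and follows essentially the same route as the paper's proof: for \eqref{eqB} the paper also reduces to $\frac{M_2}{2}\int_0^h(\bfE\|\bL_t-\bL_0\|_2^4)^{1/2}dt$ via the Hessian-Lipschitz bound and Minkowski, then controls the fourth moment of $\bL_t-\bL_0$ using the SDE, the chi-square moment $\bfE\|\bW_t\|_2^4=t^2p(p+2)$, and the integration-by-parts bound $\bfE\|\nabla f(\bL_0)\|_2^4\le M^2p(p+2)$; for \eqref{eqC} the paper likewise applies It\^o to $\nabla f(\bL_t)$ so that the stochastic integral cancels, bounds $\|\nabla(\Delta f)\|_2\le M_2p$ exactly as you do, and handles the lack of a third derivative by the same mollification argument you describe. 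The only tactical difference is that for the constant in \eqref{eqB} the paper evaluates the explicit integral $\frac{1}{(Mh)^2}\int_0^{Mh} t\sqrt{t^2+12t+4}\,dt$ and takes the sup over $Mh\le 1$, rather than using a Young split on the cross term; both devices exploit $Mh\le 1$ and lead to the same final bound.
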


\begin{proof}
We first note that we have
\begin{align}
\|\tilde\bV\|_2
		&\le \int_0^h \|\int_0^1\big(\nabla^2\!f(\bL_0+x(\bL_t-\bL_0))-\nabla^2\!f(\bL_0)\big)\,dx(\bL_t-\bL_0)\|_2\,dt\\
		&\le 0.5M_2\int_0^h \|\bL_t-\bL_0\|_2^{2}\,dt.
\end{align}
In view of \eqref{eqA}, this implies that
$(\bfE[\|\tilde\bV\|_2^2])^{1/2}\le 0.5M_2\int_0^h (\bfE[\|\bL_t-\bL_0\|_2^{4}])^{1/2}\,dt$.
Using the triangle inequality
and integration by parts (precise details of the computations are omitted in the
interest of saving space), we arrive at
\begin{align}
\bfE[\|\bL_t-\bL_0\|_2^{4}]
		&\le \bfE[\|\int_0^t\nabla f(\bL_s)\|_2^4] +4\bfE[\|\bW_t\|_2^4]\\
		&\qquad	
		+ 12\bigg(\bfE[\|\int_0^t\nabla f(\bL_s)\|_2^4]\bfE[\|\sqrt{2}\bW_t\|_2^4]\bigg)^{1/2}\\
		&\le t^4 M^2 p(2+p) + 12 t^3 Mp(2+p) + 4t^2p(2+p)\\
		& = p(2+p)t^2(t^2M^2+12tM+4).\label{eq14}
\end{align}
Integrating this inequality, we get
\begin{align}
(\bfE[\|\tilde\bV\|_2^2])^{1/2}
		&\le 0.5M_2(p^2+2p)^{1/2}\int_0^h t(t^2M^2+12tM+4)^{1/2}\,dt\\
		&\le \frac{0.5M_2(p^2+2p)^{1/2}}{M^2}\int_0^{Mh} t(t^2+12t+4)^{1/2}\,dt\\
		&\le {0.5M_2h^2(p^2+2p)^{1/2}}\sup_{x\in(0,2]}\frac1{x^2}\int_0^{x} t(t^2+12t+4)^{1/2}\,dt\\
		&= \frac{0.5M_2h^2(p^2+2p)^{1/2}}{4}\int_0^{2} t(t^2+12t+4)^{1/2}\,dt\\
		&\le 1.16M_2h^2(p^2+2p)^{1/2}.
\end{align}
This completes the proof of \eqref{eqB}. To prove \eqref{eqC}, we first assume that
$f$ is three times continuously differentiable and apply the Ito formula:
$$
\nabla f(\bL_t) -\nabla f(\bL_0) = \int_0^t \nabla^2 f(\bL_s)\, d\bL_s +
\int_0^t \Delta [\nabla f(\bL_s)]\,ds.
$$
Let us check that $\|\Delta [\nabla f(\bx)]\|_{2}=\|\nabla[\Delta f(\bx)]\|_{2}
\le M_2p$ for every $\bx\in\RR^p$. Indeed, let
us introduce the function $g:\RR^p\to\RR$ defined by $g(\bx) =\Delta f(\bx) =
\tr[\nabla^2 f(\bx)]$. The third item of condition F implies that $|g(\bx+t\bu)
-g(\bx)|\le p M_2|t|$ for every $t\in\RR$ and every unit vector $\bu\in\RR^p$.
Therefore, letting $t$ go to zero, we get $|\bu^\top \nabla g(\bx)|\le pM_2$ for
every unit vector $\bu$. Choosing $\bu$ proportional to $\nabla g(\bx)$, we get
the inequality $\|\nabla g(\bx)\|_2 = \|\nabla [\Delta f(\bx)]\|_2\le pM_2$.
This leads to
\begin{align}
\|\bar\bV\|_{L_2}
		&\le \int_0^h\int_0^t
		\big\|\nabla^2 f(\bL_s)\nabla f(\bL_s) - \Delta [\nabla f(\bL_s)]\big\|_{L^2}\,ds\,dt\\
		&\le \int_0^h\int_0^t \big(M\big\|\nabla f(\bL_s)\big\|_{L^2} + M_2p\big)\,ds\,dt\\
		&=(\nicefrac12)(M^{3/2}\sqrt{p}+M_2p) h^2.
\end{align}
This completes the proof of the lemma in the case of three times continuously differentiable functions $f$.
If $f$ is two-times differentiable with a second-order derivative satisfying the Lipschitz condition, then
we can choose an arbitrarily small $\delta>0$ and apply the previous result to the smoothed function
$f_\delta = f*\varphi_{\delta}$. Here, $\varphi_\delta$ denotes the density of the Gaussian distribution
$\mathcal N_p(0,\delta^2\bfI_p)$ and ``$*$'' is the convolution operator. The formula $\nabla^2 f_\delta =
(\nabla^2 f)*\varphi_\delta$ implies that $f_\delta$ satisfies the required smoothness assumptions with
the same constants $M$ and $M_2$ as the function $f$. Thus, defining $\bar\bV_\delta$ in the same way
as $\bar\bV$ with $f_\delta$ instead of $f$, we get
\begin{align}
\|\bar\bV_\delta\|_{L_2} &\le (\nicefrac12)(M^{3/2}\sqrt{p}+M_2p) h^2.\label{bvdelta1}
\end{align}
On the other hand, setting $g_\delta = f-f_\delta$, we get
\begin{align}
\|\bar\bV_\delta - \bar\bV\|_{L_2}
		&\le \int_0^h\Big\|\nabla g_\delta(\bL_t) - \nabla g_\delta(\bL_{0})-\sqrt2\,\int_0^t\nabla^2 g_\delta(\bL_s)d\bW_s\Big\|_{L^2}\,dt\\
		&\le \int_0^h\big\|\nabla g_\delta(\bL_t) - \nabla g_\delta(\bL_{0})\big\|_{L^2}\,dt\\
		&\qquad +
		\sqrt{2p}\,\int_0^h\bigg(\int_0^t\bfE\|\nabla^2 g_\delta(\bL_s)\|^2ds\bigg)^{1/2}\,dt.
\end{align}
Using the Lipschitz continuity of $\nabla f$ and $\nabla^2 f$, one easily checks that
\begin{align}
\|\nabla g_\delta(\bx)\|_2
		&\le \int_{\RR^p} \|\nabla f(\bx-\by)-\nabla f(\bx)\|_2 \varphi_\delta(\by)\,d\by \\
		&\le M \int_{\RR^p}\|\by\|_2\varphi_\delta(\by)\,d\by\le M\delta\sqrt{p},\\
\|\nabla^2 g_\delta(\bx)\|
		&\le \int_{\RR^p} \|\nabla^2 f(\bx-\by)-\nabla^2 f(\bx)\| \varphi_\delta(\by)\,d\by\\
		&\le M_2 \int_{\RR^p}\|\by\|_2\varphi_\delta(\by)\,d\by\le M_2\delta\sqrt{p}.
\end{align}
This implies that the limit, when $\delta$ tends to zero, of $\|\bar\bV_\delta - \bar\bV\|_{L_2}$ is equal to zero.
As a consequence,
\begin{align}
\|\bar\bV\|_{L_2}
	&\le  \lim_{\delta\to 0}  \big(\|\bar\bV_\delta\|_{L_2}+\|\bar\bV_\delta - \bar\bV\|_{L_2}\big)\\
	&\le (\nicefrac12)(M^{3/2}\sqrt{p}+M_2p) h^2+\lim_{\delta\to 0}  \|\bar\bV_\delta - \bar\bV\|_{L_2}\\
	&\le (\nicefrac12)(M^{3/2}\sqrt{p}+M_2p) h^2.
\end{align}
This completes the proof of the lemma.
\end{proof}

\begin{lemma}\label{lemE}
Let $A$, $B$ and $C$ be given non-negative numbers such that $A\in (0,1)$. Assume that the sequence of non-negative numbers
$\{x_k\}_{k\in \NN}$ satisfies the recursive inequality
\begin{align}
x_{k+1}^2&\le [(1-A)x_k+C]^2+B^2
\end{align}
for every integer $k\ge 0$. Let us denote
\begin{align}
E&= \frac{(1-A)C+\big\{C^2+(2A-A^2)B^2\big\}^{1/2}}{2A-A^2}  \ge \frac{(1-A)C}{A(2-A)}+\frac{B}{\sqrt{A(2-A)}}\\
D&= \big\{[(1-A)E+C]^2+B^2\big\}^{1/2} - (1-A)E\le C + \frac{B^2A}{C+\sqrt{A(2-A)}\,B}
\end{align}
Then
\begin{align}
x_k &\le (1-A)^{k} x_0 + \frac{D}{A} \le (1-A)^{k} x_0 + \frac{C}{A} + \frac{B^2}{C+\sqrt{A(2-A)}\,B}\label{inq3}
\end{align}
for all integers $k\ge 0$.
\end{lemma}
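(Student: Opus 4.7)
The plan is to reduce the analysis to studying the deterministic map $T(y)=\sqrt{[(1-A)y+C]^2+B^2}$, since the hypothesis gives $x_{k+1}\le T(x_k)$. First I would verify that the value $E$ appearing in the statement is exactly the unique non-negative fixed point of $T$: the equation $E^2=[(1-A)E+C]^2+B^2$ is the quadratic $(2A-A^2)E^2-2(1-A)CE-(C^2+B^2)=0$, whose positive root matches the formula given. Since $T(E)=E$ and $D=T(E)-(1-A)E$, this immediately yields the identity $D=AE$, so $D/A=E$ and the claim $x_k\le (1-A)^k x_0+D/A$ is the same as $x_k\le (1-A)^k x_0+E$.

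The second ingredient is a linear dominating bound: $T(y)\le (1-A)y+D$ for every $y\ge E$. After squaring, this is equivalent to $B^2\le (D-C)\bigl(2(1-A)y+D+C\bigr)$. At $y=E$ equality holds, because the fixed-point identity can be rearranged as $B^2=(AE-C)\bigl((2-A)E+C\bigr)$; for $y>E$ the right-hand side is strictly larger (it is affine in $y$ with non-negative slope once one checks $D\ge C$, i.e., $E\ge C/A$, which is apparent from the explicit formula). With these two ingredients, I would prove $x_k\le (1-A)^k x_0+E$ by induction on $k$, splitting into two cases at each step: if $x_k\ge E$, the linear bound gives $x_{k+1}\le (1-A)x_k+D\le (1-A)^{k+1}x_0+(1-A)E+AE=(1-A)^{k+1}x_0+E$; if $x_k<E$, monotonicity of $T$ gives $x_{k+1}\le T(E)=E\le (1-A)^{k+1}x_0+E$. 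This establishes the first inequality of \eqref{inq3}.

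It then remains to verify the closed-form upper bound on $D$. For this I would use the conjugate identity: since $\sqrt{[(1-A)E+C]^2+B^2}=E$, we have $D=C+B^2/\bigl((2-A)E+C\bigr)$. The lower bound $E\ge \frac{(1-A)C}{A(2-A)}+\frac{B}{\sqrt{A(2-A)}}$ stated in the lemma reduces to the trivial inequality $\sqrt{C^2+A(2-A)B^2}\ge \sqrt{A(2-A)}\,B$, which follows from $C^2\ge 0$. It implies $(2-A)E+C\ge (C+\sqrt{A(2-A)}\,B)/A$, whence $D\le C+\frac{AB^2}{C+\sqrt{A(2-A)}\,B}$ and, dividing by $A$, the second inequality of \eqref{inq3}. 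No step looks delicate; the only subtlety is the two-case split in the induction, because the linear dominating bound $T(y)\le (1-A)y+D$ fails for $y<E$ and must be replaced there by the crude but sufficient estimate $T(y)\le T(E)=E$.
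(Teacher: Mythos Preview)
Your argument is correct and rests on the same core idea as the paper: identify $E$ as the fixed point of $T$, note $D=AE$, and exploit that $y\mapsto T(y)-(1-A)y$ is decreasing so that $T(y)\le(1-A)y+D$ once $y\ge E$. The paper packages this slightly differently: instead of inducting directly on $x_k$ with a two-case split, it introduces the auxiliary linear sequence $y_0=x_0+E$, $y_{k+1}=(1-A)y_k+D$, and shows $y_k\ge x_k\vee E$ in one stroke---since $y_0\ge E$ and the recursion preserves $y_k\ge E$, the linear bound always applies and no case distinction is needed. Your derivation of the closed-form bound on $D$ via the conjugate identity $D=C+B^2/\bigl((2-A)E+C\bigr)$ is cleaner than what the paper leaves implicit.
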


\begin{proof} We will repeatedly use the fact that $D = EA$.
Let us introduce the sequence $y_k$ defined as follows: $y_0 = x_0 + E$
and
\begin{align}
y_{k+1} = (1-A) y_k + D, \quad k=0,1,2,\ldots
\end{align}
We will first show that $y_k\ge x_k\vee E$ for every $k\ge 0$. This can be done by
mathematical induction. For $k=0$, this claim directly follows from the definition of $y_0$.
Assume that for some $k$, we have $x_k\le y_k$ and $y_k\ge E$. Then, for $k+1$, we
have
\begin{align}
x_{k+1}
		&\le \big([(1-A)x_k+C]^2+B^2\big)^{1/2} \\
		&\le \big([(1-A)y_k+C]^2+B^2\big)^{1/2} \\
		& = (1-A)y_k + \big([(1-A)y_k+C]^2+B^2\big)^{1/2} - (1-A)y_k\\
		&\le (1-A)y_k + \big([(1-A)E+C]^2+B^2\big)^{1/2} - (1-A)E
		 = y_{k+1}
\end{align}
and, since $D= EA$, $y_{k+1} = (1-A) y_k + D \ge (1-A)E  +  EA = E$.
Thus, we have checked that the sequence $x_k$ is dominated by the sequence $y_k$. It remains to
establish an upper bound on $y_k$. This is an easy task since $y_k$ satisfies a first-order
linear recurrence relation. We get
\begin{align}
y_{k} &= (1-A)^{k-1} y_1 + \sum_{j=0}^{k-2}(1-A)^j D\\
			&= (1-A)^{k-1} \Big(x_1 +\frac{D}{A}\Big) + \frac{D}{A}\big(1-(1-A)^{k-1}\big)\\
			&= (1-A)^{k-1} x_1 +\frac{D}{A}.
\end{align}
This completes the proof of \eqref{inq3}.
\end{proof}

\begin{proof}[Proof of \Cref{lemG}]
Let us introduce the $\RR^p$-valued random process
$\bv_t = -\exp\big\{\int_0^t\bfH_u\,du\big\}\int_0^t \bfH_s\bx_s\,ds$.
The time derivative of this process satisfies
$$
\bv'_t = - \exp\Big\{\int_0^t\bfH_u\,du\Big\}\bfH_t\balpha_t.
$$
This implies that $\bv_t = - \int_0^t\exp\big\{\int_0^s\bfH_u\,du\big\}\bfH_s\balpha_s\,ds$. Using the definition of
$\bv_t$,  we can check that
$\int_0^t \bfH_s\bx_s\,ds = -\exp\big\{-\int_0^t\bfH_u\,du\big\}\bv_t =
\int_0^t\exp\big\{-\int_s^t\bfH_u\,du\big\}\bfH_s\balpha_s\,ds$. Substituting this in \eqref{recx}, we get
\begin{align}\label{intermedx}
\bx_t
		&= \balpha_t -\int_0^t\exp\big\{-\int_s^t\bfH_u\,du\big\}\bfH_s\balpha_s\,ds.
\end{align}
On the other hand---using the notation $\bfM_t = \exp\big\{\int_0^t\bfH_u\,du\big\}$ and
the integration by parts formula for semi-martingales---the second integral on the right hand side of \eqref{finalx} can be
modified as follows:
\begin{align}
\int_0^t \exp\Big\{-\int_s^t\bfH_u\,du\Big\}d\balpha_s
		& = \bfM_t^{-1}\int_0^t \bfM_sd\balpha_s\\
		& = \bfM_t^{-1}\Big(\bfM_t\balpha_t-\bfM_0\balpha_0-\int_0^t d\bfM_s\,\balpha_s\Big)\\
		&= \balpha_t - \exp\Big\{-\int_0^t\bfH_u\,du\Big\}\balpha_0 \\
		&\qquad	- \int_0^t \exp\Big\{-\int_s^t\bfH_u\,du\Big\}\bfH_s\balpha_s\,ds.
\end{align}
Combining this equation with \eqref{intermedx}, we get the claim of the lemma.
\end{proof}

\begin{lemma}\label{lemH} Let $A$ and $B$ be given positive numbers and $\{C_k\}_{k\in\NN}$ be a given sequence
of real numbers. Assume that the sequence $\{x_k\}_{k\in\NN}$ satisfies the recursive inequality
\begin{align}
x_{k+1} \le Ax_k^2+2Bx_k+C_k,\qquad \forall k\in\NN.
\end{align}
Then, for all $k\in\NN$,
\begin{align}
x_k \le \frac1A \big(Ax_0+B\big)^{2^k}\exp\bigg\{\sum_{j=0}^{k-1} 2^{k-1-j}\,
		\frac{AC_j+B(1-B)}{(Ax_0+B)^{2^{j+1}}}\bigg\}.
\end{align}
\end{lemma}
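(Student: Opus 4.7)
The plan is to reduce the quadratic recursion to a purely squaring one by completing the square, then iterate using the elementary inequality $1+t\le e^t$. Multiplying the hypothesis by $A$ and adding $B$ to both sides gives
\begin{equation*}
Ax_{k+1}+B \le (Ax_k+B)^2 + AC_k + B(1-B),
\end{equation*}
so with the substitutions $y_k = Ax_k + B$ and $D_j = AC_j + B(1-B)$ this becomes $y_{k+1}\le y_k^2 + D_k$. Since $Ax_k\le y_k$, the bound of the lemma will follow once we prove
\begin{equation*}
y_k \le y_0^{2^k} \exp\{E_k\},\qquad E_k := \sum_{j=0}^{k-1} 2^{k-1-j}\,\frac{D_j}{y_0^{2^{j+1}}},
\end{equation*}
after which dividing by $A$ gives the statement.

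I would establish this by induction on $k$. The case $k=0$ is immediate since the sum defining $E_0$ is empty. For the inductive step, squaring the hypothesis bound and then adding $D_k$ gives
\begin{equation*}
y_{k+1}\le y_0^{2^{k+1}}e^{2E_k}\Bigl(1+\frac{D_k}{y_0^{2^{k+1}}e^{2E_k}}\Bigr) \le y_0^{2^{k+1}} \exp\Bigl\{2E_k+\frac{D_k}{y_0^{2^{k+1}}e^{2E_k}}\Bigr\},
\end{equation*}
where the second step uses $1+t\le e^t$. A short computation from the definition of $E_k$ shows $E_{k+1} = 2E_k + D_k/y_0^{2^{k+1}}$, so the induction closes provided one can replace $D_k/(y_0^{2^{k+1}}e^{2E_k})$ by the larger quantity $D_k/y_0^{2^{k+1}}$ inside the exponential.

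The main obstacle is precisely this last replacement, which requires $e^{2E_k}\ge 1$, hence $E_k\ge 0$, in the case $D_k\ge 0$. This holds by induction under the implicit sign condition $D_j = AC_j+B(1-B)\ge 0$ for all $j$, and the condition is automatically satisfied in the target application to \Cref{propB}: there $B = e^{-mh}/2\in(0,\nicefrac12)$ so $B(1-B)>0$, and the quantities playing the role of the $C_j$'s are non-negative moment bounds $\bfE[\|G_{j,h}\|_2^{2^{K-j}}]^{2^{j-K}}$. Once $E_k\ge 0$ is secured at each step, the induction goes through, and combining the resulting estimate with $Ax_k\le y_k$ and dividing by $A$ produces the claimed inequality.
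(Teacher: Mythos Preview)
Your approach is essentially the same as the paper's: both complete the square via the substitution $y_k=Ax_k+B$ (the paper does this implicitly through the auxiliary equality-recursion and the quantity $Ay_k+B$), obtain the squaring recursion $y_{k+1}\le y_k^2+D_k$ with $D_k=AC_k+B(1-B)$, and close the induction using $1+t\le e^t$. The paper's proof is terser --- it simply asserts that the chain $(Ax_0+B)^{2^k}\le Ay_k+B\le z_k$ holds ``by induction'' --- but unwinding that assertion leads to exactly the comparison you carry out.

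You are right to flag the sign issue: the step where $D_k/(y_0^{2^{k+1}}e^{2E_k})$ is replaced by $D_k/y_0^{2^{k+1}}$ needs $E_k\ge 0$, which in turn relies on $D_j\ge 0$ for all $j<k$. The paper's inductive claim $(Ax_0+B)^{2^k}\le Ay_k+B$ needs the same hypothesis (otherwise the lower bound can fail at the very first step), so this is a tacit assumption in the paper as well, not something you are missing. Your observation that the application to \Cref{propB} satisfies $D_j\ge 0$ --- since $B=\tfrac12 e^{-mh}\in(0,\tfrac12)$ and the $C_j$ are non-negative moment quantities --- is exactly the right way to resolve it.
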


\begin{proof}
Let us introduce the sequences $\{y_k\}_{k\in\NN}$ and $\{z_k\}_{k\in\NN}$
defined by the relations $y_0=x_0$,
\begin{align}
y_{k+1} &= Ay_k^2+2By_k+C_k,\\
z_k     &= (Ax_0+B)^{2^k}\exp\bigg\{\sum_{j=0}^{k-1} 2^{k-1-j}\,
		\frac{AC_j+B(1-B)}{(Ax_0+B)^{2^{j+1}}}\bigg\}.
\end{align}
Using mathematical induction, one easily shows that inequalities
\begin{align}
x_k\le y_k\qquad \text{and}\qquad  (Ax_0+B)^{2^k}\le Ay_k+B \le z_k
\end{align}
hold for every $k\in\NN$. As a consequence, we get
\begin{align}
x_{k}\le \frac{Ax_k+B}{A}\le \frac{Ay_k+B}{A}\le \frac{z_k}A.
\end{align}
This completes the proof of the lemma.
\end{proof}

\begin{lemma}\label{lemI}
Let $A,B,C,D$ be positive numbers satisfying $D<A<1$ and $\{x_k\}_{k\in\NN}$
be a sequence of positive numbers satisfying the inequality
\begin{align}
x_{k+1} \le \big((1-A)^2x_k^2+B^2\big)^{1/2}+C+Dx_k.
\end{align}
Then, for every $k\ge 0$, we have
\begin{align}
x_k\le (1-A+D)^k x_0 + \frac{C}{A-D}+\frac{B}{\sqrt{(A-D)(2-A-D)}}.
\end{align}
\end{lemma}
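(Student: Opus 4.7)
The plan is to mimic the strategy of the proof of \Cref{lemE}: locate a fixed point $E$ of the map $g(x) := \sqrt{(1-A)^2 x^2 + B^2} + Dx + C$, build a first-order linear dominator sequence $y_k$ whose attractor is $E$, show by induction that $y_k$ dominates $x_k$, and finally bound $E$ by the explicit constants appearing in the statement.

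First, I would solve $g(E)=E$, i.e.\ $\sqrt{(1-A)^2 E^2+B^2}=(1-D)E-C$. Squaring and using the identity $(1-D)^2-(1-A)^2=(A-D)(2-A-D)=:\beta$ produces the quadratic
\begin{align}
\beta E^2 - 2C(1-D)E + C^2 - B^2 = 0.
\end{align}
Taking the larger root and applying $\sqrt{C^2(1-A)^2+\beta B^2}\le C(1-A)+B\sqrt{\beta}$, together with $(1-D)+(1-A)=2-A-D$, would give the key bound
\begin{align}
E \le \frac{C}{A-D} + \frac{B}{\sqrt{(A-D)(2-A-D)}}.
\end{align}

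Second, I would introduce $y_k$ by $y_0 = x_0 + E$ and $y_{k+1} = (1-A+D)y_k + (A-D)E$, whose solution is $y_k = (1-A+D)^k x_0 + E$, since $E$ is the stationary point of this linear recursion. By induction on $k$ I would show simultaneously that $x_k \le y_k$ and $y_k \ge E$. The base case is immediate. For the inductive step, monotonicity of $g$ in $x\ge 0$ yields $x_{k+1}\le g(x_k)\le g(y_k)$; the step then reduces to the pointwise inequality $g(y)\le (1-A+D)y + (A-D)E$ for all $y\ge E$. Combining this bound on $y_k$ with the estimate on $E$ delivers the claim.

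The only slightly delicate point is the inequality $g(y)\le (1-A+D)y + (A-D)E$ for $y\ge E$. Equality at $y=E$ holds by the fixed-point property, so it suffices to check that $\phi(y):=[(1-A+D)y+(A-D)E]-g(y)$ is non-decreasing on $[E,\infty)$. A direct differentiation gives
\begin{align}
\phi'(y) = (1-A)\Big(1 - \frac{(1-A)y}{\sqrt{(1-A)^2 y^2+B^2}}\Big) \ge 0,
\end{align}
which closes the induction. This is the main obstacle, although it is a short calculus check rather than a serious difficulty; the rest of the proof is an algebraic bookkeeping exercise modeled on \Cref{lemE}.
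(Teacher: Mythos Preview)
Your proof is correct and follows essentially the same strategy as the paper: build a linear dominating sequence $y_k$ with attractor $E$, and use the monotonicity of $x\mapsto\sqrt{x^2+a^2}-x$ (your $\phi'\ge 0$) to close the induction. The only difference is in the choice of $E$. You take $E$ to be the exact fixed point of $g$, which forces you to solve a quadratic and then bound the root. The paper instead sets $E=B/\sqrt{(A-D)(2-A-D)}$ directly---this is precisely the value for which $\sqrt{(1-A)^2E^2+B^2}=(1-D)E$---and carries $C$ separately in $F=C+(A-D)E$; the target bound then drops out without any estimation of $E$. Both routes are equivalent, but the paper's choice avoids the quadratic detour.
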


\begin{proof}
We start by setting
$$
E = \frac{B}{\sqrt{(A-D)(2-A-D)}},\qquad F = C+ (A-D)E
$$
and by defining a new sequence $\{y_k\}_{k\in\NN}$ by
$y_0 = x_0+ E$ and
$$
y_{k+1} = (1-A+D)y_k + F.
$$
Our goal is to prove that $y_k\ge x_k\vee E$ for every $k$.
This claim is clearly true for $k=0$. Let us assume that it
is true for the value $k$ and prove its validity for $k+1$.
Since the function $x\mapsto \sqrt{x^2+a^2} - x$ is decreasing,
we have
\begin{align}
x_{k+1}
		&\le \sqrt{(1-A)^2y_k^2+B^2}+C+Dy_k\\
		&\le (1-A+D)y_k +C + \sqrt{(1-A)^2y_k^2+B^2}-(1-A)y_k\\
		&\le (1-A+D)y_k +C + \sqrt{(1-A)^2E^2+B^2}-(1-A)E = y_{k+1}.
\end{align}
On the other hand,
\begin{align}
y_{k+1}
		&\ge (1-A+D)y_k + (A-D)E\\
		&\ge (1-A+D)E + (A-D)E = E.
\end{align}
This implies, in particular, that $x_k\le y_k$ for every $k\in\NN$.
Since $\{y_k\}$ satisfies a first-order linear recursion, we get
$y_k = (1-A+D)^ky_0 + F(1-(1-A+D)^k)/(A-D)$. 
\end{proof}
{\renewcommand{\addtocontents}[2]{}

\section*{Acknowledgments}
The work of AD was partially supported by the grant
Investissements d'Avenir (ANR-11-IDEX-0003/Labex Ecodec/ANR-11-LABX-0047).
}%

{\renewcommand{\addtocontents}[2]{}
\bibliography{Literature1}}

\end{document}